\numberwithin{equation}{section}
\theoremstyle{plain}
\newtheorem{introtheorem}{Theorem}
\newtheorem{theorem}{Theorem}[section]
\newtheorem{proposition}[theorem]{Proposition}
\newtheorem{lemma}[theorem]{Lemma}
\newtheorem{conjecture}[theorem]{Conjecture}
\newtheorem{question}[theorem]{Question}
\newtheorem*{proposition*}{Proposition}
\theoremstyle{definition}
\newtheorem{example}[theorem]{Example}
\theoremstyle{remark}
\newtheorem{remark}[theorem]{Remark}
\newtheorem{remarks}[theorem]{Remarks}
\newcommand{\exref}[1]{Example~\ref{#1}}
\def\N{{\mathbb N}}
\def\Q{{\mathbb Q}}
\def\C{{\mathbb C}}
\def\L{{\mathbb L}}
\def\map{\mathrm{map}}
\def\Der{\mathrm{Der}}
\def\cat0{\mathrm{cat}_0}
\def\dim{\mathrm{dim}}
\def\aut{\mathrm{aut}_1}
\def\B{B\mathrm{aut}_1}
\def\ad{\mathrm{ad}}
\begin{document}

\title[Gottlieb elements  and  the Sullivan model]
{The structuring effect of a Gottlieb element on the Sullivan model of a space}

\author{Gregory  Lupton}

\address{Department of Mathematics,
           Cleveland State University,
           Cleveland OH 44115}

\email{G.Lupton@csuohio.edu}

\author{Samuel Bruce Smith}

\address{Department of Mathematics,
   Saint Joseph's University,
   Philadelphia, PA 19131}

\email{smith@sju.edu}

\date{\today}

\keywords{Gottlieb elements, Sullivan minimal models, classifying space for fibrations,  derivations  }
\subjclass[2010]{Primary: 55P62 55R35; Secondary: 55R15}
\begin{abstract}
We   show a Gottlieb element in the rational homotopy of a simply connected space  $X$  implies a structural result  for the Sullivan minimal model, with different results depending on parity.  
 In the even-degree case,  we prove a rational Gottlieb element is a terminal homotopy element.  This fact allows us to  complete an   argument of Dupont to prove an even-degree Gottlieb element gives   a free factor in the rational cohomology of a formal space of finite type.     We apply the odd-degree result to     affirm  a special case of the $2N$-conjecture on Gottlieb elements of a finite complex.   We combine our results to make a contribution to the realization problem for the classifying space $\B(X)$.  
We prove a simply connected space $X$ satisfying $\B(X_\Q) \simeq  S_\Q^{2n}$ must have infinite-dimensional rational homotopy and vanishing rational Gottlieb elements above degree $2n-1$ for $n= 1, 2, 3.$  

\end{abstract}   

\maketitle

\section{Introduction}
Let $X$ be   simply connected  and  of finite CW type.  A homotopy class $\alpha \in \pi_n(X)$ is a {\em Gottlieb element}  if the map   of the wedge $(\alpha  \mid \!  1_X ) \colon  S^n   \vee X \to X$  extends to    a map   of the product $F \colon S^n \times X  \to X$.   The definition directly implies the vanishing of the Whitehead product of $\alpha$   with any $\beta \in \pi_m(X)$.  Amongst  many nice results,  Gottlieb proved that    an even-degree  Gottlieb element is in the kernel of the mod $p$ Hurewicz homomorphism   for    any  prime $p$  not dividing the Euler characteristic   of $X$    \cite[Th.4.4]{Got}.  
     
 Gottlieb elements have a natural    description in   rational homotopy theory which we recall,  briefly,  here.   Our general reference  for rational homotopy theory  is the text  \cite{FHT}.     A space $X,$ as hypothesized,   has a Sullivan  minimal model which is    a free DG algebra $(\land V, d)$ over $\Q$   with each $V^n$  finite-dimensional.  The differential $d$ has  image in the decomposable elements of ${\land} V$.       The affiliated map $F$ for a Gottlieb class $\alpha \in \pi_n(X)$ induces a   map $F^* \colon (\land V, d) \to (\Q(u), 0)  \otimes   (\land V, d)$ of DG algebras.  Here  $u$ is of degree $n$ and    $(\Q(u), 0) \cong H^*(S^{n}; \Q)$      is    a non-minimal Sullivan model for $S^n$ with trivial differential. 
Since $F$ extends $(\alpha \! \mid \! 1_X )$  we have $F^*(v) = u$ where $v \in V^n$ is dual to $\alpha$ under Sullivan's isomorphism  $V^n \cong \mathrm{Hom}(\pi_n(X),  \Q)$ \cite[Th.15.11]{FHT}.    Further, writing $F^*(\chi) = \chi  + u\theta(\chi)$ for $\chi \in \land V,$ the linear map    $\theta$ so-defined  is   a    {\em derivation}  of $\land V$, $\theta(\chi_1\chi_2)=\theta(\chi_1)\chi_2 + (-1)^{|\chi_1|n} \chi_1\theta(\chi_2),$  lowering degrees by $n,$ of ({\em degree $n$})  and $\theta$ is a {\em derivation cycle},  $d \theta - (-1)^n\theta d = 0$.     
We say    $v \in V^n$ is a {\em Gottlieb element} for the Sullivan minimal model $(\land V, d)$   if there exists a derivation cycle $\theta$ of $\land V$ of degree $n$ with $\theta(v) =1.$   

In \cite[Lem.1.1]{Hal88},  Halperin showed that the derivation cycle $\theta$ associated to a Gottlieb element $v \in V^n$  that is  a cycle,  $dv = 0$,   induces  a change of basis for the Sullivan model $(\land V, d)$ and a resulting DG algebra factorization: $$(\land V, d) \cong (\land(v), 0) \otimes (\land V' ,d').$$ 
 Recall the space of cycles $Z(V) \subseteq V$    may be identified as the dual of the image of the rational Hurewicz homomorphism. The result thus represents a natural extension of Gottlieb's theorem,  mentioned above (cf.  \cite{Opr86}).       
 
 In this paper, we explore the structural consequence  of a Gottlieb element $v \in V^n$ in the general case,  when        $dv \neq 0$.    For $n$ even, we obtain  a surprising  result.  Say an element $v \in V^n$    is a {\em terminal element}   if 
  $v$  does not appear in a  differential $dw$ for any $w \in V$. 
 
\begin{introtheorem}  \label{terminal}  An even-degree   Gottlieb element $v \in V^{2n}$ in a Sullivan minimal model $(\land V, d)$  is  a terminal element.  
\end{introtheorem}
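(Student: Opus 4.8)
The plan is to pass to the purely algebraic statement and reduce it to a single vanishing assertion. Since $v$ has even degree $2n$, the derivation cycle condition reads $d\theta=\theta d$ and $\theta$ behaves like an ordinary (signless) derivation, with $\theta(v^k)=k\,v^{k-1}$. Fixing a complement $W$ to $\langle v\rangle$ in $V$ gives an algebra decomposition $\land V=\Q[v]\otimes\land W$; writing $\chi=\sum_{k\ge0}v^k c_k$ with $c_k\in\land W$, the generator $v$ is \emph{not} a factor of $\chi$ precisely when $\partial_v\chi=\sum_{k\ge1}k\,v^{k-1}c_k$ vanishes, where $\partial_v$ is the derivation with $\partial_v(v)=1$, $\partial_v(W)=0$. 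Thus $v$ is terminal if and only if $\partial_v(dw)=0$ for every generator $w\in V$. The case $w=v$ follows from a parity count: a monomial $v^jc$ with $j\ge1$ and $c\in\land W$ has degree $2nj+|c|$, which cannot equal the odd number $|dv|=2n+1$ when $j\ge 2$, while $j=1$ would force $|c|=1$, impossible in a simply connected model. Hence $dv\in\land W$.

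Next I would argue by contradiction using a least-degree counterexample. Suppose $v$ is not terminal and choose a generator $w$ of least degree $m$ with $\partial_v(dw)\neq 0$; by the previous step $w\neq v$ and $m>2n$, so every generator of degree $<m$ has a $v$-free differential. Consequently the subalgebra $A\subseteq\land W$ generated by the generators of degree $<m$ is a sub-DG algebra (the differential of such a generator is decomposable of degree $\le m$, hence lies in $A$), and one may write $dw=\sum_{k=0}^{K}v^kc_k$ with $c_k\in A$ and top coefficient $c_K\neq 0$, $K\ge 1$. Expanding $0=d(dw)$, using $dv\in\land W$ and comparing coefficients of each power $v^k$, yields the recursion
$$ dc_k=-(k+1)\,(dv)\,c_{k+1},\qquad 0\le k\le K,\quad c_{K+1}=0. $$
In particular $c_K$ is a cocycle of $A$ and $[dv]\cdot[c_K]=0$ in $H(A)$, the whole tower recording a kind of infinite divisibility of $[c_K]$ by $[dv]$.

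I would then feed in the derivation cycle through $\theta(dw)=d\theta(w)$. The even-degree hypothesis enters decisively here: because $\theta(v^k)=k\,v^{k-1}$ is nonzero, $\theta$ genuinely detects the powers of $v$ occurring in $dw$, whereas in the odd case $v^2=0$ would annihilate them, which is exactly the parity dichotomy governing the paper's two regimes. Writing $\theta(w)=\sum_j v^jg_j$ with $g_j\in A$ and expanding both sides of $\theta(dw)=d\theta(w)$ as polynomials in $v$ over $\land W$, a comparison of $v$-coefficients in the top degree forces $\theta(c_K)$ to be a coboundary of a controlled form; combined with the recursion above and the fact that $\theta$ commutes with $d$, this is designed to contradict $c_K\neq 0$, closing the induction.

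The step I expect to be the main obstacle is precisely this last comparison, because $\theta$ is \emph{not} $\partial_v$: it acts nontrivially on the complement $W$ and may reintroduce $v$, so the terms $\theta(c_k)$ pollute the power-of-$v$ bookkeeping on which the third step depends. The heart of the matter is to decouple $\theta$'s action off $v$ from this bookkeeping. I would attempt this by strengthening the induction to control $\theta$ on generators of degree $<m$ as well, adjusting $\theta$ within its class in the derivation complex $(\Der\land V,\,[d,-])$ by a coboundary $[d,\eta]$ so that the adjusted derivation preserves $\land W$ through degree $m$ while retaining $\theta(v)=1$. Showing that such a compatible adjustment exists is the technical crux on which the whole scheme rests.
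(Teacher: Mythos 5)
There is a genuine gap here, and in fact two distinct problems. The first is the one you flag yourself: the decisive step --- extracting a contradiction from $\theta(dw)=d\theta(w)$, or adjusting $\theta$ by a coboundary $[d,\eta]$ so that it preserves $\land W$ --- is never carried out, so the argument does not close. The second is more serious: the statement you reduce to in your opening paragraph, namely that $\partial_v(dw)=0$ for every generator $w$ with respect to a complement $W$ \emph{fixed at the outset} and the \emph{original} differential $d$, is false, so no completion of your scheme can succeed. Concretely, take $V=\langle y,a,b\rangle$ with $|y|=|a|=2$, $|b|=3$, $dy=da=0$ and $db=(a+y)^2=a^2+2ay+y^2$. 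The derivation $\theta$ of degree $2$ with $\theta(y)=1$, $\theta(a)=-1$, $\theta(b)=0$ satisfies $[d,\theta]=0$, so $y$ is a Gottlieb element, yet $\partial_y(db)=2a+2y\neq 0$ for the complement $W=\langle a,b\rangle$. Your induction would take $w=b$ as the least-degree counterexample, but there is no contradiction to be found: $\theta(db)=d\theta(b)=0$ checks out, and your recursion $dc_k=-(k+1)(dv)c_{k+1}$ is vacuous since $dy=0$. The point is that terminality is a statement about a \emph{suitably chosen} basis (here $a'=a+y$ makes $db=a'^2$), not about an arbitrary fixed complement; your framing bakes in a claim that is strictly stronger than the theorem and is simply not true. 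Your proposed repair also cannot work as stated: subtracting $[d,\eta]$ keeps $d$ fixed, and for a generator $u\in W^{2n}$ one has $[d,\eta](u)=0$ for every $\eta$ of degree $2n+1$ for degree reasons ($\eta(u)$ has negative degree, and $\eta$ of a decomposable element cannot produce a nonzero scalar), so a nonzero constant $\theta(u)$ can never be removed by a derivation coboundary.

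The missing idea is to change the basis of the model rather than the cohomology class of $\theta$: conjugate both $d$ and $\theta$ by an algebra automorphism. Define the degree-zero, locally nilpotent derivation $\psi$ of $\land V$ by $\psi(y)=0$ and $\psi(\chi)=-y\theta(\chi)$ for $\chi\in\land W$, set $\phi=\exp(\psi)$, $d'=\phi^{-1}\circ d\circ\phi$ and $\theta'=\phi^{-1}\circ\theta\circ\phi$. A telescoping computation with the exponential series shows $\theta'$ vanishes identically on $\land W$ while $\theta'(y)=1$ and $d'(y)=dy$. Then, writing $d'(w)=\chi_0+y\chi_1+\cdots+y^k\chi_k$ with $\chi_i\in\land W$, the single identity $\theta'(d'(w))=d'(\theta'(w))=0$ gives $\chi_1+y\chi_2+\cdots+y^{k-1}\chi_k=0$ and hence $\chi_i=0$ for $i\geq 1$. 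This replaces your entire third step (the sub-DG-algebra $A$ and the $d^2=0$ recursion are correct but unnecessary) and makes precise the sense in which $v$ is terminal: in the isomorphic minimal model $(\land V,d')$. Your observation that the even/odd dichotomy is governed by $\theta$ detecting powers of $v$ is sound, but without the automorphism $\phi$ the bookkeeping you describe cannot be decoupled from $\theta$'s action on $W$.
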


We apply Theorem \ref{terminal} to an old problem on the location of Gottlieb elements for a formal space $X$. We recall   that a formal space $X$  has a \emph{bigraded} Sullivan minimal model   $(\land V, d)$ with the generators carrying a second, or lower, grading
$V = \bigoplus_{i \geq 0} V_i$ that extends multiplicatively to the whole of $\land V$. The differential satisfies $d(V_0) = 0$ and $d(V_i) \subseteq \left( \land V \right)_{i-1}$ for $i \geq 1$ (see \cite{HS}).  The following conjecture is attributed to C. Jacobsson.    
\begin{conjecture}\label{conj: Jacobsson}
If $X$ is a formal space with bigraded Sullivan minimal model $(\land V, d)$, then all Gottlieb elements   are contained in $V_0 \oplus V_1$.  
\end{conjecture}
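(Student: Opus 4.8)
The plan is to attack the even-degree case of the above Conjecture using \thmref{terminal}, and to isolate the odd-degree case as the genuine obstruction. I would first record the easy converse to \thmref{terminal}: if $v$ is terminal, then the derivation $\partial_v$ determined by $\partial_v(v)=1$ and $\partial_v(W)=0$ for a complement $V=\langle v\rangle\oplus W$ is automatically a derivation cycle. Indeed, on a generator $u$ one has $\bigl(d\partial_v-(-1)^{|v|}\partial_v d\bigr)(u)=-(-1)^{|v|}\partial_v(du)$, and this vanishes precisely because $v$ occurs in no $du$. Thus $\partial_v(v)=1$ exhibits $v$ as a Gottlieb element, and combined with \thmref{terminal} we obtain, in even degree, the equivalence \emph{Gottlieb} $\iff$ \emph{terminal}. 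The even case of the Conjecture is therefore equivalent to the purely structural assertion that a terminal generator of even degree must lie in $V_0\oplus V_1$.

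Next I would reduce to a single lower grading. Since $d$ lowers the lower grading by one, the derivation-cycle equation splits into lower-grading-homogeneous summands, each again a derivation cycle; matching lower gradings in $\theta(v)=1$ lets me treat a generator $v\in V_i$ homogeneous for the lower grading, with the witnessing cycle lowering it by exactly $i$. Assume for contradiction that $i\ge 2$. Terminality yields a factorization $(\land V,d)\cong(\land W,d)\otimes(\land(v),0)$ in which $\land W$ is a bigraded sub-DGA, $z:=dv$ is a \emph{decomposable} cocycle of $\land W$ lying in lower grading $i-1\ge 1$, and the class $[z]\in H_{i-1}(\land W)$ is nonzero — since it is killed inside $\land V$ only by $v$. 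Note that $\land W$ is not itself a bigraded model (removing $v$ destroys acyclicity exactly where $z$ sits), so there is no one-line contradiction here.

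The heart of the even-degree argument would be to play this surviving class against the defining acyclicity of a bigraded model, namely that $H(\land V)$ is concentrated in lower grading $0$. Because $v$ is even, $\land(v)=\Q[v]$ is polynomial and $d(v^{k})=k\,v^{k-1}z$, so in the spectral sequence of the extension the classes $[z],[z]^2,\dots$ and their $v$-multiples organize into a polynomial tower that must be annihilated upon passing to $H(\land V)$. I would then exploit the defining property of a bigraded model (see \cite{HS}) that $d$ carries $V_{i+1}$ isomorphically onto the positive-lower-grading cohomology $H_{i}\!\bigl(\land V_{\le i}\bigr)$ that remains to be killed, and show that consistency of the cancellation of this tower in each positive lower grading forces $v$ itself to appear in some $d(w)$ with $w\in V_{i+1}$ — contradicting terminality. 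Hence $i\le 1$. The delicate point is the bookkeeping that converts ``the tower over $[z]$ is cancelled'' into ``$v$ is hit,'' and I expect this to be the main technical step of the even-degree half.

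The main obstacle, however, is the odd-degree case, which lies genuinely beyond \thmref{terminal}. For odd $v$ the theorem fails — the top generator of the minimal model of $S^{2m+1}_\Q$ or of $\C P^{n}$ is an odd Gottlieb element that need not be terminal — so the equivalence used above is unavailable and there is no complementary sub-DGA to work with. Worse, when $v$ is odd, $\land(v)$ is exterior, $v^2=0$, and the polynomial-tower mechanism driving the even argument collapses, so low lower grading is no longer forced by the local cohomological bookkeeping. Controlling odd central (Gottlieb) elements appears to require global input about the rational homotopy Lie algebra of $X$ — equivalently, about the center of the Yoneda algebra $\mathrm{Ext}_{H^*(X;\Q)}(\Q,\Q)$ — and its interaction with complete-intersection and finiteness dichotomies for the model. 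This is where I expect the real difficulty to concentrate, and why only the even-degree half of the Conjecture seems within reach of the present methods.
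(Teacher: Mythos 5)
First, a framing issue: the statement you were asked about is Conjecture~\ref{conj: Jacobsson}, which is \emph{open}; the paper does not prove it. What the paper proves is Theorem~\ref{J}, the even-degree case \emph{under the additional hypothesis that $H^*(X;\Q)$ is finitely generated}, and it obtains the stronger conclusion $dy=0$ (so $y\in V_0$, not merely $V_0\oplus V_1$). Your opening reduction is fine and consistent with the paper: the converse of \thmref{terminal} (terminal $\Rightarrow$ Gottlieb via the dual derivation $\partial_v$) is exactly the observation made at the start of Section~\ref{sec3}, so in even degree ``Gottlieb $\iff$ terminal'' is correct. Your honest assessment that the odd-degree case lies beyond these methods also matches the paper, which leaves it untouched.

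The genuine gap is in the mechanism you propose for the even case. Since $v$ is even, $z=dv$ has \emph{odd} total degree, so $[z]^2=0$ and there is no ``polynomial tower'' of powers of $[z]$ to cancel; moreover every $v$-multiple $v^k z$ is already exact via $d\bigl(v^{k+1}/(k+1)\bigr)=v^k z$, so nothing in that tower ``remains to be killed'' by generators of $V_{i+1}$, and the step you defer (``the cancellation of the tower forces $v$ to be hit'') has no content as formulated. This is not a repairable bookkeeping detail: a direct bigraded argument of the kind you sketch, needing no finiteness hypothesis, would settle more than the paper (or Dupont) can. The paper's actual route is entirely different: after the basis change of \thmref{even} making $y$ terminal and $\theta'$ dual to $y$, it adjoins odd generators $a_i$ with $\delta(a_i)=z_i^2$ for a basis of $V_0^{\mathrm{even}}$, uses \emph{formality together with finite generation of $H^*$} to show the enlarged model has finite-dimensional cohomology (Lemma~\ref{Hfinite}), extends $\theta'$ by zero on the $a_i$ --- which is precisely where terminality of $y$ is needed --- and then contradicts the vanishing of even-degree Gottlieb elements for spaces of finite L.S.~category \cite[Pro.29.8(ii)]{FHT}. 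That external input, and the finiteness hypothesis it requires, are both absent from your plan, and without a correct replacement for your cancellation step even the even-degree half remains unproved.
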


In  \cite{Dupont}, Dupont initiated work on Conjecture \ref{conj: Jacobsson} and enunciated several results on this problem. However,  this paper was never published and some arguments appear to be incomplete.    We  here reproduce   Dupont's argument in  the even-degree   case  of  \cite[Pro.4]{Dupont}  and use Theorem \ref{terminal}   to complete the proof.  
 
 \begin{introtheorem}  
    {\em (cf. \cite[Pro.4]{Dupont})}    \label{J} 
Let $X$ be a formal space with finitely generated rational cohomology and suppose $y \in V^{2n}$ is an even-degree  Gottlieb element in the   Sullivan minimal model $(\land V, d)$ for $X$.  Then $dy = 0$ and  there  is a DG algebra isomorphism $$(\land V, d) \cong (\land(y), 0)  \otimes  (\land V', d').$$    \end{introtheorem}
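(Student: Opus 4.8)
The plan is to deduce the factorization from the single assertion $dy=0$, and to establish $dy=0$ using \thmref{terminal} together with formality and finite generation. Assuming $dy=0$, the Gottlieb element $y$ is a cocycle, so Halperin's change of basis \cite[Lem.1.1]{Hal88} supplies the desired isomorphism $(\land V,d)\cong(\land(y),0)\otimes(\land V',d')$. In fact \thmref{terminal} already does most of this work: it shows $y$ is terminal, and terminality forces the subalgebra $\land V'$ generated by a degreewise complement of $y$ to be a sub-DG algebra, with $\land V=\Q[y]\otimes\land V'$ as algebras and $dy\in\land V'$. Once $dy=0$, this tensor decomposition is automatically one of DG algebras. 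So the whole content is the vanishing $dy=0$.

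To prove $dy=0$ I would argue by contradiction and set $a:=dy$. By \thmref{terminal} we have $\land V=\Q[y]\otimes\land V'$ with $\land V'$ a sub-DG algebra and $a\in\land V'$; by minimality $a$ is decomposable, and it is a cocycle of odd degree $2n+1$, so $a^2=0$ and the class $[a]\in H^{2n+1}(\land V',d')$ is nilpotent. The differential on $\Q[y]\otimes\land V'$ is then $D(y^j\chi)=jy^{j-1}a\chi+y^jd'\chi$, and a direct computation—equivalently, the Wang/Gysin sequence attached to the short exact sequence $0\to\land V'\to\land V\xrightarrow{\partial/\partial y}\land V\to0$ of complexes—identifies the connecting map with multiplication by $[a]$. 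Because $X$ is formal, $(\land V,d)$ is a bigraded model and has no cohomology in positive lower degree; tracking the lower grading through this computation forces the complex $\bigl(H^*(\land V',d'),\,\cdot[a]\bigr)$ to be exact.

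The final—and hardest—step is to turn this exactness, together with finite generation of $H^*(X;\Q)$ and minimality, into the conclusion $[a]=0$ (hence $a=0$ and $dy=0$). The parity of $y$ is essential: since $y$ is even, $\Q[y]$ is a polynomial rather than an exterior factor, so a nonzero $[a]$ would have to annihilate the entire ascending tower $y,y^2,y^3,\dots$; but $[a]$ is odd, hence nilpotent, and a nilpotent multiplication cannot render $\bigl(H^*(\land V',d'),\cdot[a]\bigr)$ exact unless $[a]=0$, on pain of producing infinitely many indecomposable classes $y^j\!\cdot c$ in $H^*(X;\Q)$ and contradicting finite generation. I expect the main obstacle to lie precisely here: one must rule out that the tower is silently truncated by higher multiplicative structure, and one must use minimality (that $a$ is decomposable in $\land V'$) to pass from $[a]=0$ to the on-the-nose equality $a=0$. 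This is the point at which \thmref{terminal} is indispensable, since terminality both yields the clean splitting $\land V=\Q[y]\otimes\land V'$ and guarantees that no later generator can absorb the tower without reintroducing $y$ into a differential.
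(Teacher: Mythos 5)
Your first paragraph agrees with the paper: once $dy=0$ is known, terminality gives $\land V=\Q[y]\otimes\land V'$ with $\land V'$ a sub-DG algebra, and the splitting is the even case of Halperin's lemma (this is exactly the remark following \thmref{even}). The problem is the second half, and there the gap is genuine --- indeed you flag it yourself. Your route to $dy=0$ rests on two claims, neither of which holds up as written. First, the assertion that formality forces the complex $\bigl(H^*(\land V',d'),\cdot[a]\bigr)$ to be exact is not justified: the change of basis that makes $y$ terminal is $\phi=\exp(\psi)$ with $\psi$ built from the Gottlieb derivation, and there is no reason for $\phi$ to be compatible with the lower grading of the bigraded model, so ``tracking the lower grading'' through the Wang sequence does not go through. (Also, the connecting map of your short exact sequence lands in $H^{*+1}(\land V')$ but starts from $H^{*-2n}(\land V)$, so it is not literally multiplication by $[a]$ on $H^*(\land V')$.) Second, the concluding step fails: exactness of multiplication by a nilpotent odd class does not force that class to vanish --- multiplication by the generator on $\land(a)$ is exact with $a\neq 0$ --- and the ``infinitely many indecomposable classes $y^jc$'' live only on the $E_2$-page of the filtration by powers of $y$; since $y$ is not a cocycle, nothing you have said prevents higher differentials or multiplicative extensions from truncating the tower, which is precisely the point you acknowledge must be ruled out but do not rule out. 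So the proof is incomplete at the one step where all the content lies.

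The paper closes this gap by a different mechanism, and it is worth comparing. After the change of basis of \thmref{even} (so that $y$ is terminal, $d'(y)=dy$, and $\theta'$ vanishes on all generators other than $y$), one adjoins odd generators $a_1,\dots,a_k$ with $\delta(a_i)=z_i^2$, where $z_1,\dots,z_k$ is a basis of $V_0^{\mathrm{even}}$ (finite because $H^*(X;\Q)$ is finitely generated). Formality enters exactly once, in Lemma~\ref{Hfinite}, to show the enlarged minimal algebra has finite-dimensional cohomology: it is quasi-isomorphic to $(\mathcal{H}\otimes\land(a_1,\dots,a_k),\delta')$, a finitely generated module over the finite-dimensional algebra $\mathcal{H}/\langle z_1^2,\dots,z_k^2\rangle$. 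Terminality of $y$ is what allows $\theta'$ to be extended by $\theta''(a_i)=0$ to a derivation cycle of the enlarged algebra, exhibiting $y$ as an even-degree Gottlieb element of a model of finite rational L.S.\ category, contradicting \cite[Pro.29.8(ii)]{FHT}. In effect, the paper kills the even polynomial generators so that the known vanishing of even Gottlieb elements in finite category can be quoted; that is the substitute for the missing argument that your tower $y^jc$ cannot be silently truncated. If you want to salvage the Gysin-sequence approach you would need to supply that argument directly.
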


The mapping theorem for rational L.S. category (\cite[Th.28.6]{FHT}) implies strong constraints on the rational Gottlieb elements of a   space $X$ of finite L.S. category.  The  even-degree Gottlieb elements  vanish in this case  and the number   of independent odd-degree Gottlieb elements is  bounded above by the rational L.S. category of $X$ \cite[Pro.29.8]{FHT}.     The   location of the odd-degree  Gottlieb elements  is the subject of the following open problem.      
\begin{conjecture}  {\em \cite[p.518]{FHT} } \label{2Nconjecture}  Let $X$ be space with $H^*(X; \Q)$ finite-dimensional.  Let  $N = \mathrm{max}\{n \mid H^n(X; \Q) \neq 0\}.$ 
 Then the rational Gottlieb elements for $X$   are of  degree $< 2N$.   \end{conjecture}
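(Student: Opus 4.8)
Conjecture \ref{2Nconjecture} is open in general, so the plan is to prove the portion of it that the present tools reach, and I expect the whole question to reduce to odd-degree Gottlieb elements. The first step is to eliminate the even-degree case outright: finite-dimensional rational cohomology forces finite rational Lusternik--Schnirelmann category, and then the mapping theorem recalled above makes every even-degree rational Gottlieb element of $X$ vanish. It therefore suffices to bound an odd-degree Gottlieb element $v \in V^{2m-1}$, where the desired conclusion $2m-1 < 2N$ is the same as $2m-1 \leq 2N-1$.

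Second, I would separate the tractable regime from the hard one. Arguing by contradiction, suppose $2m-1 \geq 2N$; then $2m-1 > N$, so $H^{2m-1}(X;\Q) = 0$ and hence $H_{2m-1}(X;\Q)=0$. Since $Z(V) \subseteq V$ is dual to the rational Hurewicz image, this forces $Z(V)^{2m-1}=0$, so $v$ is not a cocycle and $dv \neq 0$. This is exactly the regime left open by Halperin's splitting: if instead $dv=0$, Halperin's lemma gives $(\land V, d) \cong (\land(v),0) \otimes (\land V', d')$, whence $H^*(\land V) \cong \land(v)\otimes H^*(\land V')$, and the class $v\otimes 1$ forces $|v| \leq N < 2N$ --- so the conjecture even holds with the stronger bound $|v| \leq N$ for Gottlieb cocycles. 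All the difficulty is thus concentrated in the non-cocycle band, and this is where the odd-degree structural theorem must be deployed: the derivation cycle $\theta$ with $\theta(v)=1$ satisfies $\theta d = -d\theta$ (the odd-parity cycle condition), and its structural normal form controls the monomials of $\land V$ in which $v$ and $dv$ can occur.

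The third and decisive step is to turn the inequality $2m-1 \geq 2N$ into a contradiction with finite-dimensionality of $H^*(\land V)$, and this I expect to be the main obstacle. In the even case the analogous mechanism is transparent: \thmref{terminal} makes $v$ terminal, and a terminal even cocycle would generate an infinite polynomial tower $v, v^2, v^3, \dots$ in cohomology. The odd case has no such shortcut, precisely because $v$ need not be terminal: the cycle condition only yields a linear relation of the shape $c + \sum_i c_i\,\theta(v_i) = 0$ among the coefficients of the linear part of any differential $dw$, so $v$ may legitimately appear in differentials, compensated by other Gottlieb-type generators $v_i$. Consequently no single monomial produces the contradiction, and one must instead track the induced map $H(\theta)\colon H^k \to H^{k-(2m-1)}$ together with the feedback of $dv$ near the top degree $N$, aiming to exhibit either a nonzero class above degree $N$ or an infinite-dimensional subspace of $H^*(\land V)$. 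Determining exactly when the compensating terms can be excluded --- presumably under a hypothesis limiting the odd Gottlieb generators, or constraining $N$ itself --- is the crux, and is the likely content of the special case that the odd-degree result secures.
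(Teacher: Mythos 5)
The statement you are addressing is Conjecture \ref{2Nconjecture}, which the paper does not prove and explicitly leaves open; what it actually establishes is the special case Theorem \ref{2N}, under the additional hypothesis that the differential $dx$ of the odd-degree Gottlieb element is a \emph{monomial} in the generators. Your preliminary reductions are sound and agree with the paper's framing: even-degree Gottlieb elements vanish because finite-dimensional rational cohomology (for a simply connected space of finite type) gives finite rational L.S.\ category, so \cite[Pro.29.8]{FHT} applies; and if $2m-1\geq 2N$ then $H_{2m-1}(X;\Q)=0$ forces $Z(V)^{2m-1}=0$, so the element cannot be a cocycle and Halperin's splitting is unavailable. Your observation that cocycle Gottlieb elements satisfy the stronger bound $|v|\leq N$ is also correct.

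The gap is that your third step stops exactly where the actual argument has to begin, and the mechanism you gesture at (tracking $H(\theta)$ and the ``feedback of $dv$ near the top degree'') is not the one that works. The paper's route in the special case is: (i) perform the change of basis of Theorem \ref{odd}, after which $x$ can occur in a differential $d'(\chi)$ only inside a term of the exact shape $x\,\lambda(\chi)\,d'x$ (Theorem \ref{odd}(c)); (ii) impose the monomial hypothesis $d'x=w_1\cdots w_n$ and prove Lemma \ref{w1}, that the lowest-degree factor $w_1$ is itself a cocycle; (iii) use (i) and (ii) to manufacture an explicit $d'$-cocycle of degree $>N$ that cannot be a coboundary --- for instance $u=w_1x$ when $|w_1|$ is odd (a cocycle since $w_1^2=0$, and not a coboundary because by the normal form $x$ can only be created alongside the full factor $d'x$), or $u=x-\eta\,w_2\cdots w_n$ when some factor of $d'x$ bounds --- contradicting $H^{>N}(\land V,d')=0$. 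Without the monomial hypothesis none of these cocycles is available, which is precisely why the conjecture remains open; and without the normal form of Theorem \ref{odd}(c) the non-exactness of $u$ cannot be certified. Your proposal correctly diagnoses that the compensating appearances of $v$ in other differentials are the obstruction, but it supplies neither the restrictive hypothesis that excludes them nor the cocycle construction that converts the degree bound into a contradiction, so it does not prove the special case, let alone the conjecture.
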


In Theorem \ref{odd} below,    we    show that a    Gottlieb element $v \in V^{2n+1}$   induces  a basis change  for $(\land V, d)$  after which  the appearance of $v$ in differentials $dw$  is explicitly constrained.    We apply this result to prove:  \begin{introtheorem} \label{2N}    Let $X$ be space with $H^*(X; \Q)$ finite-dimensional  and with top nontrivial degree $N$.  Let  $x \in V^{2n+1}$ be a Gottlieb element  in the Sullivan minimal model $(\land V, d)$ with $dx$ a monomial in the generators of $V.$ Then $2n+1 < 2N.$
\end{introtheorem}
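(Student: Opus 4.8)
The plan is to read the hypotheses as presenting $X$ rationally as an $S^{2n+1}$-fibration and to reduce the assertion to an estimate on the resulting Euler class. Note first that, since $2n+1$ is odd and $2N$ even, the inequality $2n+1<2N$ is equivalent to $\deg(dx)=2n+2\le 2N$. If $dx=0$, then $x$ is a cycle and Halperin's factorization \cite[Lem.~1.1]{Hal88} splits off a free tensor factor $(\land(x),0)$, producing a nonzero class in degree $2n+1$; hence $N\ge 2n+1$ and the bound holds. So I may assume $m:=dx\neq 0$. Invoking \thmref{odd} I would arrange, after a change of basis, that $x$ is \emph{terminal}, so that the remaining generators span a sub-DG algebra $(\land V_0,d_0)$, that $(\land V,d)\cong(\land V_0,d_0)\otimes(\land(x),0)$ as graded algebras with $dx=m\in\land V_0$, and that $m$ is a $d_0$-cocycle. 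Writing $e:=[m]\in H^{2n+2}(\land V_0,d_0)$, this exhibits $H^*(\land V,d)$ as the cohomology of the mapping cone of $\cup e$, i.e.\ the Gysin sequence of the rational $S^{2n+1}$-fibration with Euler class $e$.

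The sequence $\cdots\to H^{k-2n-2}(\land V_0)\xrightarrow{\,\cup e\,}H^k(\land V_0)\to H^k(\land V)\to H^{k-2n-1}(\land V_0)\to\cdots$, together with finiteness of $H^*(\land V,d)$, forces $\cup e$ to be an isomorphism in all high degrees. I would first dispose of the degenerate regimes. If $e=0$ then $x$ may be corrected to a cycle and we are back in the split case above. If $e\neq 0$ is nilpotent---in particular whenever the monomial $m$ contains an odd generator, since then $m^2=0$---then the eventual isomorphisms $\cup e$ are simultaneously eventually zero, so $H^*(\land V_0)$ is finite-dimensional; as $e$ is a nonzero class in degree $2n+2$, a short count in the sequence yields $N\ge 4n+3$. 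In either case $2n+1<2N$ holds with room to spare.

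The essential case is $e$ non-nilpotent, which forces $m^2\neq 0$ and hence that $m$ is a monomial in even generators only. The key lemma I would establish is that every generator dividing such a monomial cocycle is itself a cocycle: grading $\land V_0$ by the power of a chosen factor $y$, the contribution of $d_0y$ to $d_0m$ is the unique term of its $y$-degree, and since multiplication by a nonzero monomial in even generators is injective on $\land V_0$, this term can vanish only if $d_0y=0$. Granting this, $e=\prod_i[y_i]^{a_i}$ is a genuine product of cohomology classes, and I may factor $m=m_1m_2$ into cocycle sub-monomials with $n+1\le\deg m_1\le 2n$. Then $[m_1]\neq 0$ in $H^*(\land V_0)$ because $[m_1][m_2]=e\neq 0$; and since $\deg m_1<2n+2$, the relevant stretch of the Gysin sequence reads $0=H^{\deg m_1-2n-2}(\land V_0)\to H^{\deg m_1}(\land V_0)\hookrightarrow H^{\deg m_1}(\land V)$, so $[m_1]$ maps to a nonzero class. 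Thus $H^{\deg m_1}(\land V,d)\neq 0$ with $\deg m_1\ge n+1$, giving $N\ge n+1$ and $2N\ge 2n+2>2n+1$.

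The main obstacle is the non-nilpotent case, and within it the cocycle-factor lemma: it is precisely here that the hypothesis that $dx$ is a single \emph{monomial}, rather than a general decomposable, is indispensable. Indeed, for a non-monomial decomposable cocycle such as $ut-vs$ (with $du=dv=0$, $ds=u^2$, $dt=uv$, so that $d(ut-vs)=u^2v-u^2v=0$) the resulting class can be cohomologically indecomposable, and the factoring argument collapses; the $y$-degree/injectivity argument uses the single-monomial form of $m$ in an essential way. The remaining, more routine, points to verify are the terminality and tensor splitting to be extracted from \thmref{odd}, and the bookkeeping in the Gysin sequence in the nilpotent regime.
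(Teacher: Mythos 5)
Your argument founders at its first structural step: Theorem~\ref{odd} does \emph{not} make an odd-degree Gottlieb element terminal. Part (c) of that theorem says only that, after the change of basis, the differential of $\chi\in\land W$ has the form $d'(\chi)=-x\,\lambda(\chi)\,dx+d'_0(\chi)$; the term $-x\,\lambda(\chi)\,dx$ need not vanish, so $x$ can still appear in the differentials of other generators. Example~\ref{ex} is exactly such a case: there $dx=uv\neq 0$ and $dy=-uvx+u^2z$ contains $x$, and no basis change removes it. Consequently there is no sub-DG-algebra $(\land V_0,d_0)$ spanned by the remaining generators---indeed, by Remark~\ref{rem:odd}(3) the component $d'_0$ satisfies $d'_0\circ d'_0=-dx\,d'_1$, so it is not even a differential---and the cohomology $H^*(\land V_0,d_0)$, the Euler class $e=[m]$, and the Gysin sequence on which your entire argument rests are all undefined. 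Terminality is the content of the \emph{even}-degree results (Theorems~\ref{terminal} and \ref{even}); it genuinely fails in odd degree.

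The paper's proof goes the other way: it assumes $n\ge N$ for contradiction, keeps the constrained form $d'(\chi)=-x\lambda(\chi)\,dx+d'_0(\chi)$ as the key structural input, proves only that the \emph{lowest-degree} factor $w_1$ of the monomial $dx=w_1\cdots w_n$ is a cycle (Lemma~\ref{w1}), and then manufactures explicit cycles of degree $>N$ (such as $w_1x$ when $|w_1|$ is odd, or $x-\eta\,w_2\cdots w_n$ when $w_1=d'(\eta)$) which cannot bound precisely because of the restriction on where $x$ may appear. If you want to retain a fibration-theoretic picture you would need to justify the tensor splitting independently, and the example above shows you cannot. A secondary worry: even granting a Gysin sequence, your lemma that \emph{every} factor of a monomial cocycle is a cocycle is not justified as stated, since $d_0y$ and the differential of the complementary factor may themselves involve $y$, so the $y$-degree bookkeeping is not as clean as claimed; the paper establishes (and needs) the statement only for the minimal-degree factor, by a downward induction from the top-degree factor.
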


 We apply our   results  in both the even and odd-degree cases  to another open problem in rational homotopy, concerning the classifying space $\B(X)$ for fibrations.  We recall that   $X,$  as hypothesized,  has a {\em universal fibration} 
$  X \to E_X \to \B(X)$   such that  any fibration of simply connected spaces with fibre $X$ is equivalent to a pullback by a map into the base     $\B(X)$, the {\em classifying space}.   (cf. \cite{May}). There is  an  H-equivalence $\Omega \B(X) \simeq \aut(X)$ with $\aut(X) = \map(X, X; 1)$.    
With our hypotheses on $X,$ the construction can be applied to the rationalization  $X_\Q$  of $X$ yielding   
$X_\Q \to E_{X_\Q} \to \B(X_\Q)$, the universal fibration with fibre $X_\Q$.    

The construction of  algebraic  models for $\B(X_\Q)$  is the   classical work  of many authors (cf. \cite[Ch.7]{Tanre}) and an area of continued interest. Recent advances provide models  in the non simply-connected case  \cite{BZ, FFM}.    
We   make primary use here of   the first  and simplest description of a model,   due to Sullivan.  Write    $\Der_n(\land V)$ for the space of degree $n$ derivations of the Sullivan minimal model $(\land V, d)$ for $n \geq 1$     with the graded commutator bracket  $[\theta, \varphi] = \theta \circ \varphi + (-1)^{|\theta||\varphi|}\varphi \circ \theta$ and  differential $D(\theta) = [d, \theta]. $    Sullivan's identity \cite[Sec.11]{Su} is an isomorphism of connected, graded Lie algebras:      \begin{equation} \label{eqS} \pi_*(\Omega \B(X_ \Q))   \cong H_*(\Der(\land V), D). \end{equation} 
The following question is  due to  M. Schlessinger.    

\begin{question}\label{ques: Baut}  \em{\cite[p.517]{FHT}}
Is every  simply connected space  $Y$ realized, rationally,  as a classifying space,    in the sense that there is  some simply connected space $X$    such that   $\B(X_\Q) \simeq  Y_\Q?$ 
\end{question}

Question \ref{ques: Baut}  suggests a  realization problem  for classifying spaces.     Given a space $Y$, the problem is to  either  construct  $X$ with $\B(X_\Q) \simeq  Y_\Q$ or, alternately, to prove no such space $X$  exists.  The identity $K(\Q, n) \simeq \B(K(\Q, n-1))$ shows    Eilenberg-Mac Lane spaces are   realized in this sense, at least  for $n \geq 2,$   Deciding whether an arbitrary product $K(\Q, m) \times K(\Q, n)$  can be so realized is already challenging   (see \cite[Th.1]{LS2016} for one class of examples).       

In previous work,  we have  proved several low-dimensional  rational types  $Y$ cannot be realized as  $\B(X)$  when $X$ is restricted to have finite-dimensional rational homotopy ($X$ is {\em $\pi$-finite}).  We have proved this result for $Y =   S^{2n}$ for $n = 1, 2$   and for $Y= \C P^n$ for $n = 2, 3, 4$;  namely,      if $\B(X_\Q) \simeq  Y_\Q$  for any such  $Y$ then  $X$ must be  $\pi$-infinite  \cite{LS2016, LS2020}.  We extend and sharpen  these results with the following.    \begin{introtheorem}\label{introthm: S4} 
Let $X$  satisfy  $\B (X_\Q) \simeq  S_\Q^{2n}$ for $n = 1, 2, 3$.   Then  $X$ is $\pi$-infinite with vanishing rational Gottlieb elements   above degree $2n-1.$  
    \end{introtheorem}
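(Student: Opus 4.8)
The plan is to prove Theorem~\ref{introthm: S4} by analyzing the Sullivan model $(\land V, d)$ of $X$ through the model for $\B(X_\Q)$ provided by Sullivan's identity \eqref{eqS}. The hypothesis $\B(X_\Q) \simeq S^{2n}_\Q$ translates into a very rigid statement about the homology of the derivation Lie algebra: since $S^{2n}_\Q$ has rational homotopy concentrated in degrees $2n$ and $4n-1$ (with a single generator in each, the latter accounting for the Whitehead square), we must have
\begin{equation}\label{eq: plan-homology}
H_i(\Der(\land V), D) \cong \pi_{i+1}(S^{2n}_\Q) \cong \begin{cases} \Q & i = 2n-1, \ 4n-2 \\ 0 & \text{otherwise.}\end{cases}
\end{equation}
So the first step is to record this computation and note in particular that $H_i(\Der(\land V), D) = 0$ for all $i$ outside $\{2n-1, 4n-2\}$, and is one-dimensional in those two degrees.

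First I would establish the $\pi$-infiniteness. The strategy is to argue by contradiction: if $V$ were finite-dimensional, then $\Der(\land V)$ would be a finitely generated module and one could use the known structure of derivation algebras of $\pi$-finite spaces to force $H_*(\Der(\land V), D)$ to be too large or of the wrong shape to match \eqref{eq: plan-homology}. Concretely, I expect to exploit the top-degree generator and a Poincar\'e-duality or Euler-characteristic style counting argument: a $\pi$-finite rational space has a derivation algebra whose homology reflects both self-maps and the degree structure of $V$, and the single pair of homology classes dictated by $S^{2n}_\Q$ is incompatible with a nontrivial finite-dimensional $V$. This part is essentially an extension of the $n=1,2$ arguments already carried out in \cite{LS2016, LS2020}, so I would cite and adapt those dimension-counting techniques, now pushed to $n=3$.

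The heart of the new contribution is the vanishing of rational Gottlieb elements above degree $2n-1$, and here I would bring in the parity-separated structural theorems. A rational Gottlieb element $v \in V^m$ corresponds to a derivation cycle $\theta$ with $\theta(v) = 1$, which represents a nonzero homology class in $H_{m}(\Der(\land V), D)$ (the identity \eqref{eqS} sends Gottlieb elements to the image of the evaluation/Gottlieb map into $\pi_*(\Omega\B(X_\Q))$). By \eqref{eq: plan-homology} the only possible nonzero degrees are $2n-1$ and $4n-2$, so a priori a Gottlieb element could occur in degree $2n-1$ or in degree $4n-2$; I must rule out the latter and anything in between. For an \emph{even}-degree candidate $v \in V^{2n}$, Theorem~\ref{terminal} forces $v$ to be terminal, and combined with the formality-free structural factorization I would argue that such a $v$ would produce a homology class in $H_{2n-1}$ of the wrong multiplicity, contradicting one-dimensionality; thus no even-degree Gottlieb element survives above $2n-1$. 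For an \emph{odd}-degree candidate in the range $2n-1 < m = 2k+1 \le 4n-2$, I would invoke Theorem~\ref{odd} to put $v$ into a normalized basis in which its appearance in differentials is constrained, and then show the associated derivation cycle cannot be a boundary while also respecting \eqref{eq: plan-homology}, again yielding a contradiction unless the class sits in the single allowed odd slot $4n-2$, which I separately exclude by matching it against the Whitehead-square generator (a genuine Gottlieb element would have to pair trivially with it).

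I expect the main obstacle to be the odd-degree case in the top band, precisely the possible Gottlieb class in degree $4n-2$: the homology there is nonzero, so I cannot simply appeal to a vanishing group, and I must instead show that the unique generator of $H_{4n-2}(\Der(\land V), D)$ is \emph{not} of Gottlieb type, i.e.\ does not arise from a derivation $\theta$ with $\theta(v)=1$ for some $v \in V^{4n-2}$. The natural tool is the quadratic nature of the $S^{2n}_\Q$ model, $(\land(x_{2n}, y_{4n-1}), d)$ with $dy = x^2$: the degree $4n-2$ homotopy of $\B(S^{2n}_\Q)$ is detected by the Whitehead square of the fundamental class, which is decidedly not a Gottlieb element, and transporting this obstruction back across the equivalence $\B(X_\Q) \simeq S^{2n}_\Q$ is where the argument will require the most care. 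I would handle this by tracking how the Gottlieb (evaluation) subspace sits inside $\pi_*(\Omega\B(X_\Q))$ under the identification and showing it misses the $4n-2$ generator, completing the proof that all rational Gottlieb elements of $X$ lie in degree $\le 2n-1$.
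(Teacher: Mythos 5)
Your reduction to the derivation model is right, and so is the observation that minimality prevents a Gottlieb derivation (one with $\theta(v)=1$) from being a $D$-boundary, which kills all degrees where $H_*(\Der(\land V),D)$ vanishes. But the entire difficulty of the theorem is concentrated in the one degree you defer to the end, namely $4n-2$, and there your proposal has a genuine gap. You argue that the degree-$(4n-1)$ generator of $\pi_*(S^{2n}_\Q)$ is the Whitehead square of $\iota_{2n}$ and is ``decidedly not a Gottlieb element,'' and that one should transport this back across $\B(X_\Q)\simeq S^{2n}_\Q$. This conflates Gottlieb elements of the classifying space with Gottlieb elements of $X$: the question is whether the class $[\theta_a,\theta_a]\in H_{4n-2}(\Der(\land V),D)$ can be represented by a derivation sending some generator of $V^{4n-2}$ to $1$ (equivalently, whether it survives the connecting map $\partial_\infty$ into $\pi_{4n-2}(X_\Q)$), and nothing about the Whitehead-square description on the $S^{2n}$ side decides this. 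The paper's actual route is to \emph{assume} $\theta_b(y_0)=1$, deduce via $1=[\theta_a,\theta_a](y_0)=\theta_a(2\theta_a(y_0))$ that there is also an odd Gottlieb element $x\in V^{2n-1}$ with $dx\neq 0$, perform the combined odd/even change of basis (Proposition~\ref{prop: odd even gottlieb structure}) to normalize how $x$ and $y$ appear in the differential, and then manufacture explicit derivation cycles of low degree that cannot all bound, forcing $\dim H_*(\Der(\land V),D)>2$. This is a long, case-specific computation for $n=2$ and $n=3$ (which is precisely why the theorem stops at $n=3$); your sketch contains none of its ingredients and gives no mechanism that could replace it.

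Two smaller points. First, your treatment of $\pi$-infiniteness as a separate Euler-characteristic/Poincar\'e-duality count under a finite-dimensionality hypothesis, adapted from \cite{LS2016, LS2020}, is both unsubstantiated and contrary to the stated point of the theorem (the paper emphasizes it does \emph{not} begin by assuming $\pi$-finiteness). In the paper $\pi$-infiniteness is a one-line corollary of the Gottlieb vanishing: in a $\pi$-finite minimal model the top-degree generators are terminal, hence Gottlieb, so all of $V$ would sit in degrees $\le 2n-1$, which is incompatible with the existence of the nonzero degree-$(4n-2)$ derivation class $[\theta_a,\theta_a]$. Second, your parity bookkeeping is off: the dangerous degree $4n-2$ is even, not odd, and there are no ``odd-degree candidates in the range $2n-1<m\le 4n-2$'' to worry about beyond the vanishing-homology observation you already made.
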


Our  proof of Theorem \ref{introthm: S4}  streamlines   the argument given in \cite[Th.3]{LS2016} and extends it to include the case $S^6$.   
The  advance  over  our previous work    is  that here we do not start   by assuming $X$ is $\pi$-finite with $\B(X_\Q) \simeq  S_\Q^{2n}$.  Instead,  we give  a  constraint on the $\pi$-infinite spaces $X$ that could satisfy this identity.  
 
The paper is organized as follows.   In Section \ref{sec2}, we show   a  rational Gottlieb element induces a basis change for a Sullivan minimal model.    We deduce   Theorems \ref{J} and   \ref{2N}  as consequences in Section \ref{sec3}.       In Section \ref{sec4}, we  give a structure result for a Sullivan minimal model having   Gottlieb elements of both parities in   the scenario that arises in the realization problem   for $Y =S^{2n}$. We apply  this result to   prove  Theorem \ref{introthm: S4}.     In Section \ref{sec5},  we give    a further example suggesting a negative answer to Question \ref{ques: Baut}.   We prove a space $X$ with $\B(X_\Q) \simeq (S^3 \vee \cdots \vee S^3)_\Q$ must have vanishing rational Gottlieb elements.

 \section{Gottlieb Elements and Basis Change for Sullivan Models}
 \label{sec2}
 
 In this section, we  observe  that a Gottlieb element $v \in V^{n}$  induces a {\em change of basis isomorphism}  for the Sullivan minimal model $(\land V, d)$.       The idea is as follows.  Start with an  automorphism    $\phi \colon \land V \to \land V $ of  graded algebras.  Then $\phi$  induces a  new differential $d'$ on $\land V$ given by  $d' = \phi^{-1}\circ d \circ \phi$.  The map 
$$\phi \colon (\land V, d') \to (\land V, d).$$
is, tautologically,    a DG algebra isomorphism. Also note that, since $d$ is decomposable,   $d'$ is decomposable as well and $\phi$ is an isomorphism of minimal DG algebras.  

  A  derivation $\theta$ of $(\land V, d)$ induces a derivation  $\theta' = \phi^{-1}\circ \theta \circ \phi$  of $\land V$. If $\theta$ is a derivation cycle, i.e.,   $[d, \theta] = 0,$ then we see 
$$[d', \theta'] = \big[ \phi^{-1}\circ d \circ \phi, \phi^{-1}\circ \theta \circ \phi\big] = \phi^{-1}\circ [d, \theta] \circ \phi = 0.$$

We focus first on the structuring effect of   an odd-degree Gottlieb element    $x \in V^{2n+1}.$ Let  $\theta$ be of degree $2n+1$ and satisfy $[d, \theta] = 0$ and  
 $\theta(x) = 1$.    Write   $V = \langle x \rangle \oplus W$ for $W \subseteq V$ a complementary subspace to $\langle x \rangle$ in $V$. Define a linear map  $\phi \colon  V \to \land V$ by setting $\phi(x) = x$ and, for each $v \in W$, 
 \begin{equation}\label{eq: odd change of basis}
\phi(v) = v - x \theta(v).
 \end{equation} 
 Extend multiplicatively to a map of algebras $\phi \colon \land V \to \land V$.  It is easy to see that $\phi$ is an automorphism.    Notice further that 
$$
 \begin{aligned}
 \phi(v) \phi(v') &= \big(v - x \theta(v)\big)\big(v' - x \theta(v')\big) \\
 & = vv'  - x \theta(v) v' - (-1)^{|v|} \, xv \theta(v')\\
 & = vv' - x \theta(vv')
 \end{aligned}
$$
It follows that $\phi(\chi) = \chi - x \theta(\chi)$ for a general $\chi \in \land W$.  Also note that, again for a general $\chi \in \land W$,   we have 
$\phi(x \chi) = \phi(x)\phi(\chi) = x\big(\chi - x \theta(\chi)\big) = x\chi$.
As above, define $d' = \phi^{-1}\circ d \circ \phi$ and  $\theta' =   \phi^{-1}\circ \theta \circ \phi$  so that $[d', \theta'] = 0$.  We prove \begin{theorem}\label{odd}
Let $(\land V, d)$ be a Sullivan minimal model with a Gottlieb element $x \in V^{2n+1}$. Let $(\land V, d')$  be as constructed above and $\theta'$ the induced derivation cycle of degree $2n+1$ of the model $(\land V, d')$.   Write $V = \langle x \rangle \oplus W$.  Then:

\begin{itemize}  
\item[(a)]  $\theta'(x) = 1$   and $d'(x) = dx$ \\ 
\item[(b)]  For $\chi \in \land W$,  we have  $\theta'(\chi) = x\,\lambda(\chi) \hbox{\, for some \, } \lambda \in \Der_{4n+2}(\land W)$  \\
\item[(c)]   For $\chi \in \land W$,  
$$d'(\chi) =  -\theta'(\chi)\, dx + d'_0(\chi) = -x\lambda(\chi)dx + d'_0(\chi) $$
 for some $d'_0 \in \Der_{-1}(\land W)$ and $\lambda \in \Der_{4n+2}(\land W)$ as in (b).     \end{itemize}
\end{theorem}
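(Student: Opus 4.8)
The plan is to work throughout with the splitting $\land V = \land W \oplus x\land W$ afforded by $x^2 = 0$ (recall $|x| = 2n+1$ is odd), and to exploit that $I := x\land W$ is an ideal of $\land V$ on which both $\phi$ and $\phi^{-1}$ restrict to the identity; this is immediate from the identity $\phi(x\chi) = x\chi$ recorded above. Writing $p \colon \land V \to \land W$ for the algebra projection with kernel $I$, I will use repeatedly that multiplication by $x$ kills the $x$-component, so $x\theta(\chi) = x\,p(\theta(\chi))$. Combining $\phi(\chi) = \chi - x\theta(\chi)$ with $\phi|_I = \mathrm{id}$ gives the companion formula $\phi^{-1}(\chi) = \chi + x\theta(\chi)$ for $\chi \in \land W$, checked by applying $\phi$. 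Finally, the derivation-cycle relation $[d,\theta] = 0$, that is $d\theta + \theta d = 0$, evaluated on $x$ yields $\theta(dx) = d\theta(x) + \theta d(x) = 0$. Part (a) is then immediate: $\phi(x) = x$ gives $\theta'(x) = \phi^{-1}(1) = 1$ and $d'(x) = \phi^{-1}(dx)$, while a degree count shows $dx \in \land W$ --- any $x$-term of $dx$ is $x\cdot c$ with $c \in (\land W)^1 = 0$ --- so $\phi^{-1}(dx) = dx + x\theta(dx) = dx$.

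The heart of the argument is (b). For a generator $w \in W$ I expect the collapse
\[
\theta\phi(w) = \theta\big(w - x\theta(w)\big) = x\,\theta^2(w),
\]
coming from $\theta(x\,\theta(w)) = \theta(w) - x\,\theta^2(w)$ (Leibniz, using $\theta(x) = 1$ and the sign $(-1)^{|\theta||x|} = -1$). Thus $\theta\phi(w) \in I$, and since $\phi^{-1}$ fixes $I$ pointwise, $\theta'(w) = x\,\theta^2(w) \in I$. As $\theta'$ is a derivation and $I$ is an ideal, $\theta'(\land W) \subseteq I$ follows by multiplicativity; writing $\theta'(\chi) = x\lambda(\chi)$ --- legitimate since $c \mapsto xc$ is injective on $\land W$ --- defines the required $\lambda \in \Der_{4n+2}(\land W)$ (equal to $p\circ\theta^2$ on generators), whose derivation property is inherited from $\theta'$ after cancelling the common factor $x$.

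For (c), set $d'_0 := p \circ d'|_{\land W}$. Because $p$ is the algebra quotient by the ideal $I$ and $d'$ is a derivation, $d'_0$ is a degree-raising derivation of $\land W$, that is $d'_0 \in \Der_{-1}(\land W)$, and $d'(\chi) = d'_0(\chi) + x\,q$ with $q \in \land W$; it remains to identify the $x$-component $q$. I would invoke the conjugation-invariant relation $[d',\theta'] = d'\theta' + \theta'd' = 0$. Evaluating it on $\chi \in \land W$ and expanding both terms using (a), (b), and the easily-checked rule $\theta'(xc) = c$, the $\land W$-component of the resulting identity reads $(dx)\,\lambda(\chi) + q = 0$. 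Since $dx$ has even degree it is central, so $q = -\lambda(\chi)\,dx$, giving $d'(\chi) = -x\lambda(\chi)\,dx + d'_0(\chi) = -\theta'(\chi)\,dx + d'_0(\chi)$, as claimed. (The $x\land W$-component yields the harmless extra relation $\lambda \circ d'_0 = d'_0 \circ \lambda$.)

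The main obstacle I anticipate is not any deep step but the disciplined bookkeeping of the splitting $\land V = \land W \oplus I$ together with the Koszul signs. The two identities that make everything collapse --- $\theta\phi(w) = x\,\theta^2(w)$ in (b), and the extraction of $q$ from $[d',\theta'] = 0$ in (c) --- each hinge on getting the sign $(-1)^{|\theta||x|} = -1$ and the centrality of the even class $dx$ correct. Once $\phi^{\pm 1}$ are known to fix $I$ pointwise and these signs are pinned down, each of (a)--(c) reduces to a short computation.
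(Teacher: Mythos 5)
Your proof is correct, and parts (a) and (b) run essentially parallel to the paper's: the same splitting $\land V = \land W \oplus x\land W$, the same formulas $\phi^{\pm 1}(\chi) = \chi \mp x\,\theta(\chi)$, and the same collapse $\theta\phi(w) = x\,\theta^2(w)$ (the paper packages this as $\lambda = \theta_1 + \tfrac{1}{2}[\theta_0,\theta_0]$, which agrees with your $p\circ\theta^2$ on generators once you note that multiplication by $x$ kills the $x$-component of $\theta(w)$). Where you genuinely diverge is part (c). The paper expands $d'(\chi) = \phi^{-1}\circ d\circ\phi(\chi)$ explicitly and then substitutes the component identity $d_1 + [d_0,\theta_0] = -dx\,\theta_1$, which it extracts beforehand by splitting $[d,\theta]=0$ on the \emph{original} model into its $\land W$ and $x\land W$ parts. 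You never expand the conjugate at all: you transfer the cycle condition to $[d',\theta']=0$, feed in the form of $\theta'$ already established in (b) together with the rule $\theta'(xc)=c$, and read off the $x$-component $d'_1(\chi) = -dx\,\lambda(\chi)$ from the $\land W$-part of that single identity. This is shorter and avoids the cancellation bookkeeping (e.g.\ the use of $\theta_0(dx)=0$) in the paper's computation, at the cost of not exhibiting $d'_0$ and $d'_1$ in terms of the original $d_0$, $d_1$, $\theta_0$; your byproduct $[\lambda, d'_0]=0$ is a cousin of the identities recorded in Remarks~\ref{rem:odd}(3). The delicate points --- $dx\in\land W$ and $\theta(dx)=0$ by simple connectivity and the cycle condition, the sign $(-1)^{|\theta||x|}=-1$, the centrality of the even-degree class $dx$, and the injectivity of multiplication by $x$ on $\land W$ --- are all handled correctly.
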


\begin{proof}  Part (a) is immediate since $\phi(x) =x$ and further $\phi$ is the identity on elements of degree $\leq 2n$.  For (b) and (c), we begin with a general observation.  Suppose that $\psi \colon \land V \to \land V$ is a derivation.  Let $\chi \in \land W$.
 If we define linear maps on $\land W$ by
$$\psi(\chi) = x \psi_1(\chi) + \psi_0(\chi),$$
then both $\psi_1$ and $\psi_0$ are derivations of $\land W$ (of different degrees).  This is easy to check, as follows.  For a product of terms  $\chi, \chi' \in \land W$, write
 $$
 \begin{aligned}
\psi(\chi \chi') &= \psi(\chi) \chi' + (-1)^{|\psi| |\chi|} \chi \psi(\chi') \\
 & = \big(x\,\psi_1(\chi) + \psi_0(\chi)\big) \chi' + (-1)^{|\psi| |\chi|}  \chi \big(x\,\psi_1(\chi') + \psi_0(\chi')\big)\\
 & = x \big(\psi_1(\chi)\chi' +  (-1)^{|\psi| |\chi| + |\chi|}  \chi \psi_1(\chi')  \big) + 
 \big( \psi_0(\chi) \chi' + (-1)^{|\psi| |\chi|}  \chi \psi_0(\chi')\big)\\
 &= x \psi_1(\chi \chi') +   \psi_0(\chi \chi').
 \end{aligned}
 $$
In the last line, we used that $\psi_0$ has the same degree as $\psi$, whereas the degree of $\psi_1$ is of opposite parity to the degree of $\psi$, so we have
$|\psi| + 1 \equiv |\psi_0| \mod 2$.   

Applying this decomposition to  $\theta$ gives%
\begin{equation}\label{eq: theta 1-0}
\theta(\chi) = x\,\theta_1(\chi) + \theta_0(\chi).
\end{equation}
As just observed, this defines derivations $\theta_0$ and $\theta_1$ of $\land W$ with $\theta_0$ of degree $2n+1$ and $\theta_1$ of degree $4n+2$.   The isomorphism $\phi$ is given by
 \begin{equation}\label{eq: phi 1-0}
 \phi(\chi) = \chi - x\, \theta_0(\chi).
 \end{equation}
Furthermore, as already noted, we have $\phi(x \chi) = x\chi$, and hence also $\phi^{-1}(x \chi) = x\chi$.  Applying $\phi^{-1}$ to (\ref{eq: phi 1-0}), then, yields 
\begin{equation}\label{eq: phi inv 1-0}
\phi^{-1}(\chi) = \chi + x\, \theta_0(\chi).
\end{equation}
We  use the  notation from  \eqref{eq: theta 1-0}, \eqref{eq: phi 1-0}, and \eqref{eq: phi inv 1-0} in what follows.

\medskip

\noindent{}(b)  We calculate:
$$
 \begin{aligned} 
\theta'(\chi) = \phi^{-1}\circ \theta \circ \phi(\chi) &  = \phi^{-1}\circ \theta \big( \chi -  x\, \theta_0(\chi)\big)\\
&= \phi^{-1}\big(x\,\theta_1(\chi) + \theta_0(\chi)  -  1\, \theta_0(\chi)  +  x\, \theta_0\circ\theta_0(\chi)\big) \\
&= \phi^{-1}\big(x\,\big(\theta_1(\chi) +  \theta_0\circ\theta_0(\chi)\big)\big) \\
&= x\,\big(\theta_1(\chi) +  \theta_0\circ\theta_0(\chi)\big),\\
\end{aligned}
$$
with the last line following from the observation made leading into \eqref{eq: phi inv 1-0}, that $\phi^{-1}(x \chi) = x\chi$.  So $\theta'$ has the form asserted.  Note that $\theta_0\circ\theta_0 = (1/2) [\theta_0, \theta_0]$  is a derivation of degree $4n+2$.   We have
$$\lambda = \theta_1 + \frac{1}{2} [\theta_0, \theta_0].$$

\noindent{}(c)  As in   \eqref{eq: theta 1-0}, we   write 
\begin{equation}\label{eq: d 1-0}
d(\chi) = x\,d_1(\chi) + d_0(\chi),
\end{equation}
which defines $d_1$ and $d_0$ as derivations of $\land W.$    We now compute:
$$
\begin{aligned}
\theta\circ d(\chi) & = \theta\big( x\,d_1(\chi) + d_0(\chi) \big)\\
&= 1\cdot d_1(\chi) -x\,\big( x\,\theta_1(d_1(\chi)) + \theta_0(d_1(\chi)) \big) +  x\,\theta_1(d_0(\chi)) + \theta_0(d_0(\chi))\\
&= d_1(\chi) -x\, \theta_0\circ d_1(\chi) +  x\,\theta_1\circ d_0(\chi) + \theta_0\circ d_0(\chi)\\
\end{aligned}
$$
and
$$
\begin{aligned}
d \circ \theta(\chi) & = d \big( x\,\theta_1(\chi) + \theta_0(\chi) \big)\\
&= dx\cdot \theta_1(\chi) -x\,\big( x\,d_1(\theta_1(\chi)) + d_0(\theta_1(\chi)) \big) +  x\,d_1(\theta_0(\chi)) + d_0(\theta_0(\chi))\\
&= dx\, \theta_1(\chi) -x\, d_0\circ \theta_1(\chi) +  x\,d_1\circ \theta_0(\chi) + d_0\circ \theta_0(\chi)\\
\end{aligned}
$$
Adding these two identities, whose left-hand sides sum to zero, gives us the following two identities amongst derivations of $\land W$:
\begin{equation}\label{eq: d-theta 0-1 ids}
[d_0, \theta_1] = [d_1, \theta_0] \quad \text{and} \quad d_1 + [d_0, \theta_0] = -dx\,\theta_1
\end{equation}
with the first from collecting terms in $x\cdot\land W$ and the second from collecting terms in $\land W$, which terms are independent of each other.

Now we calculate:
$$
 \begin{aligned} 
 d'(\chi) &= \phi^{-1}\circ d \circ \phi(\chi) =   \phi^{-1}\circ d \big( \chi -  x\, \theta_0(\chi)\big)\\
&= \phi^{-1}\big(x\,d_1(\chi) + d_0(\chi)  -  dx\, \theta_0(\chi)  +  x\, d_0\circ\theta_0(\chi)\big) \\
&= x\,d_1(\chi) + \big( d_0(\chi) + x\,\theta_0\circ d_0(\chi)\big)  \\
& \hbox{\hskip1truein} - \big( dx + x\, \theta_0(dx)\big)\big( \theta_0(\chi) + x\, \theta_0\circ\theta_0(\chi) \big) + x\, d_0\circ\theta_0(\chi)\\
&= x\,\big( d_1(\chi)  + [d_0, \theta_0](\chi) - dx\, \theta_0\circ\theta_0(\chi)\big) + d_0(\chi)  - dx\, \theta_0(\chi).\\
\end{aligned}
$$
The last line follows using  $\theta_0(dx) = 0$, which follows for degree reasons, to cancel one term from the penultimate line.  Notice that we also commuted $x$ and $dx$ without changing the sign. Using the second identity of 
\eqref{eq: d-theta 0-1 ids}, we arrive at
$$
\begin{aligned}
d'(\chi) &= - x\,\big(dx\,\theta_1(\chi)  + dx\, \theta_0\circ\theta_0(\chi)\big) + d_0(\chi)  - dx\, \theta_0(\chi)\\
&= -x \,dx\,\lambda(\chi) +  d'_0(\chi),
\end{aligned}
$$
where we have written $d'_0(\chi) = d_0(\chi)  - dx\, \theta_0(\chi)$ for the term not involving $x$.  One can see this is a derivation either  as  it is a sum of two terms, each of which acts as a derivation on $\land W$.   Alternately,  the terms here may be written in the form $d'(\chi) = x d'_1(\chi) + d'_0(\chi)$, with both $d'_1$ and $d'_0$ derivations of $\land W$ by the observation at the top of the proof.  
\end{proof}
 
\begin{remarks}  \label{rem:odd} We record some further consequences.  
\begin{itemize}
\item[(1)]    If $dx = 0$ then, by (c),    there is a  DG algebra decomposition $$(\land V, d') \cong (\land(x), 0) \otimes (\land W, d_0').$$ We thus recover the odd-degree case of \cite[Lem.1]{Hal88}.      \\ 
\item[(2)] 
If we decompose   $[\theta, \theta]$    in the form 
$[\theta, \theta] = x \, [\theta, \theta]_1 + [\theta, \theta]_0,$
with $[\theta, \theta]_1$ and $[\theta, \theta]_0$ derivations of $\land W$.  Then  $\lambda$ in part (b) is given by   $$\lambda = \frac{1}{2}\,[\theta, \theta]_0.$$  
 \item[(3)] 
Decomposing  $d'$ in this form so that  
$d'(\chi) = x\,d'_1(\chi) + d'_0(\chi)$
we then have the identities:   %
$$[d'_1, d'_0] = 0 \hbox{\, and \, } d_0'\circ d'_0 + dx\,d'_1 = 0.$$
\end{itemize}
\end{remarks}

An  even-degree Gottlieb element $y \in V^{2m}$ gives rise to a basis change, as well.  In this case, the result  gives Theorem \ref{terminal}, that $y$ is a terminal element.  We explain this now.  Let $\theta$  be the   derivation cycle of degree $2m$ with 
 $\theta(y) = 1$ and  $[d, \theta] = d\theta - \theta d = 0$.   
 Write  $V = \langle y \rangle \oplus W$ and  define a linear map $\psi \colon \land V \to \land V$ by setting
$$\psi(y) = 0 \quad \text{and} \quad \psi(\chi) = - y \theta(\chi)$$
for $\chi \in \land W$.  Extend  multiplicatively so that the ideal of $\land V$ generated by $y$ is in the kernel of $\psi$.  Then $\psi$ is a degree-zero derivation of $\land V$ and, further, $\psi$ is {\em locally nilpotent}  meaning that, for each element $\xi \in \land V$, there is some $r$ for which $\psi^r(\xi) = 0$.  We may thus exponentiate $\psi$ to obtain a linear map
$$\phi = \mathrm{exp}(\psi) = \mathrm{id} + \psi + \frac{1}{2!}\, \psi^2  + \frac{1}{3!}\, \psi^3 + \cdots.$$
Then   $\phi$ is  an automorphism of $\land V$ and   so induces a  DG algebra isomorphism %
$$\phi \colon (\land V, d') \to (\land V, d)$$
 by setting $d' = \phi^{-1}\circ d \circ \phi$.  We  transfer the derivation cycle $\theta$ to one of $(\land V, d')$ by setting $\theta':= \phi^{-1}\circ \theta \circ \phi$.

\begin{theorem}\label{even}
Let $(\land V, d)$ be a Sullivan minimal model with a Gottlieb element $y \in V^{2m}$.  Let $\phi \colon (\land V, d') \to (\land V, d)$    and $\theta'$ the induced derivation cycle of $(\land V, d')$ be as above.  Write $V = \langle y \rangle \oplus W$.     Then with notation as above:  \begin{itemize}
\item[(a)]  $\theta'(y) = 1$ and $d'(y) =dy$   \\
\item[(b)] For $\chi \in \land W$, we have  $\theta'(\chi) = 0$.  \\
\item[(c)] The differential $d'$ restricts to a derivation of $\land W$, namely, we have $$d'(\land W) \subseteq \land W.$$ %
\end{itemize}
\end{theorem}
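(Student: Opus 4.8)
The plan is to pin down the conjugated derivation $\theta' = \phi^{-1}\circ\theta\circ\phi$ exactly: I claim it is the partial derivative $\partial_y$ (differentiation with respect to $y$) on $\land V = \land(y)\otimes\land W$, so that $\theta'(y)=1$ and $\theta'$ kills $\land W$, and all three parts then drop out. Since $\theta'$ is again a derivation — the conjugate of $\theta$ by an algebra automorphism — it is determined by its values on generators, so throughout it suffices to evaluate on $y$ and on $w\in W$. Part (a) is the warm-up: because $\psi(y)=0$ we have $\phi(y)=\phi^{-1}(y)=y$, whence $\theta'(y)=\phi^{-1}(\theta(y))=\phi^{-1}(1)=1$ and $d'(y)=\phi^{-1}(dy)$. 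To finish (a) I would note that $dy\in\land W$: it is decomposable of degree $2m+1$, and as $X$ is simply connected ($V^1=0$) no monomial of $dy$ can carry a factor of $y$ for degree reasons. The derivation-cycle relation $[d,\theta]=0$ applied to $y$ gives $\theta(dy)=d(\theta y)=d(1)=0$, and since every higher-order term of $\phi$ applied to an element of $\land W$ carries a factor $\theta^k(\cdot)$ with $k\ge 1$, this forces $\phi(dy)=dy$, hence $d'(y)=dy$.

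The heart of the matter, and the step I expect to be the main obstacle, is (b): that $\theta'(w)=0$ for each generator $w\in W$, equivalently $\theta(\phi(w))=0$. This is precisely what the passage to the full exponential $\phi=\exp(\psi)$ must buy us over the single linear correction used in the odd-degree case — it has to straighten $\theta$ completely, so that $\phi(w)$ is the combination $\sum_{k\ge0}\frac{(-1)^k}{k!}\,y^k\theta^k(w)$, the unique element with leading term $w$ on which $\theta$ vanishes. The mechanism is a telescoping: from $\theta(y)=1$ one gets $\theta(y^k\xi)=k\,y^{k-1}\xi+y^k\theta(\xi)$, and the ``$k\,y^{k-1}$'' term produced at level $k$ cancels the ``$y^k\theta$'' term produced at level $k-1$, so the series collapses to $0$. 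Local nilpotency of $\psi$ is what makes this legitimate: each application of $\psi$ raises the power of $y$ while preserving total degree, so the complementary $\land W$-degree strictly decreases, and $\land W$-degrees are bounded below. Once $\theta'(y)=1$ and $\theta'(w)=0$ are in hand, the derivation $\theta'$ must equal $\partial_y$, giving $\theta'(\chi)=0$ for all $\chi\in\land W$. The delicate point I would watch is that the cancellation consume every term; getting the exponential precisely right (rather than a naive truncation) is essential, since any mismatch leaves a nonzero residue in $y\cdot\land W$.

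With (b) established, part (c) is short and robust to sign conventions. Conjugation preserves the vanishing of the bracket, so $[d',\theta']=0$, and with $\theta'=\partial_y$ (even degree) this says that $d'$ and $\partial_y$ commute, $d'\circ\partial_y=\partial_y\circ d'$. For any $\chi\in\land W$ we then compute $\partial_y(d'\chi)=d'(\partial_y\chi)=d'(0)=0$; but on $\land(y)\otimes\land W$ the kernel of $\partial_y$ is precisely the $y$-degree-zero part $\land W$, so $d'(\chi)\in\land W$. This is $d'(\land W)\subseteq\land W$, proving (c). Combined with $d'(y)=dy\in\land W$ from (a), it shows that in the new basis the generator $y$ occurs in no differential, which is the terminality asserted in \thmref{terminal}.
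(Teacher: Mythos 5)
Your proposal is correct and follows essentially the same route as the paper: conjugating by the exponential $\phi=\exp(\psi)$, verifying via the telescoping cancellation that $\theta(\phi(\chi))=0$ so that $\theta'$ becomes $\partial_y$, and then using $[d',\theta']=0$ together with the fact that the kernel of $\partial_y$ is $\land W$ to conclude $d'(\land W)\subseteq\land W$. The only cosmetic difference is that you evaluate on generators and invoke the derivation property, while the paper computes $\phi(\chi)$ and the $y$-adic expansion of $d'(v)$ directly; the substance is identical.
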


\begin{proof}
Again  (a) is immediate.  For (b),   let $\chi \in \land W$  and observe
$$\phi(\chi) = \chi + \psi(\chi) + \frac{1}{2!}\, \psi^2(\chi)  + \frac{1}{3!}\, \psi^3(\chi) + \cdots.$$
Notice that $\psi^2(\chi) = \psi\big(-y\theta(\chi)\big) = (-y)(-y) \theta^2(\chi)$, since we defined $\psi(y) = 0$.  Generally, we have
$$\psi^k(\chi) =  (-1)^{k} y^k  \theta^k(\chi),$$
for $k \geq 1$.  It follows that
$$\phi(\chi) = \chi + \sum_{k \geq 1}\ (-1)^k\, \frac{1}{k!}\,y^k \theta^k(\chi).$$
Recall that $\theta$ is locally-nilpotent, so the sum is finite for each $\chi.$  
Applying the derivation $\theta$, with $\theta(y) = 1$, we have 
$$
\begin{aligned}
\theta\big(\phi(\chi)\big) &= \theta(\chi) - \theta(\chi) - y \theta^2(\chi)  \\
& \hbox{\hskip1truein}+ \sum_{k \geq 2}\ (-1)^k\left( \, \frac{1}{(k-1)!}\,y^{k-1} \theta^k(\chi) + \frac{1}{k!}\,y^k \theta^{k+1}(\chi)\right)\\
&= \theta(\chi) - \theta(\chi) - y \theta^2(\chi) + y \theta^2(\chi) \\
&\hbox{\hskip1truein}+ \sum_{k \geq 3}\ (-1)^k\, \frac{1}{(k-1)!}\,y^{k-1} \theta^k(\chi) + \sum_{k \geq 2}\ (-1)^k\,\frac{1}{k!}\,y^k \theta^{k+1}(\chi)\\
&= \theta(\chi) - \theta(\chi) - y \theta^2(\chi) + y \theta^2(\chi) \\
&\hbox{\hskip1truein}+ \sum_{k \geq 2}\ \left((-1)^{k+1} \, \frac{1}{k!}\,y^{k} \theta^{k+1}(\chi) + (-1)^k \,\frac{1}{k!}\,y^k \theta^{k+1}(\chi)\right)\\
&= 0.
\end{aligned}
$$
Finally, this gives $\theta'(\chi) = \phi^{-1}\circ \theta' \circ \phi(\chi) = \phi^{-1}\big( x\lambda\big( \phi(\chi)\big)\big) = 0$, as claimed. 

\ \ 

(c) Let $v\not= y$ be a generator of degree at least $2m-1$ and write 
$$d'(v) = \chi_0 + y \chi_1 + \cdots + y^k \chi_k,$$
for some $k \geq 1$ and each $\chi_i \in \land W$, that is, not containing terms that involve $y$. The derivation $\theta'$ satisfies $\theta'(y) = 1$ and $\theta'(\chi_i) = 0$ for each $i$.  Also, because $\theta'$ is a cycle with respect to $D' = \ad(d')$, we have 
$$\theta'\circ d'(v) = d'\circ \theta'(v) = 0,$$
and hence we have
$$0 = 0 +  \chi_1 +  y \chi_2 + \cdots +  y^{k-1} \chi_k.$$
Each of these terms must be zero independently of each other, since the $\chi$ do not involve $y$, an even-degree generator of $\land V$.  Hence, each $\chi_i = 0$ for $i = 1, \ldots, k$, and we have $d'(v) = \chi_0$, which does not involve $y$.  For generators $v$ of degree $2m-2$ and lower, $d'(v)$ cannot involve $y$ for degree reasons.    \end{proof}

Observe that the even-degree case of   \cite[Lem.1]{Hal88} follows from Theorem \ref{even}.    When  $dy = 0$, we have a splitting $(\land V, d) \cong (\land(y), 0) \otimes (\land W, d').$ 
\begin{proof}[Proof of Theorem \ref{terminal}]
 Observe that Theorem \ref{terminal} is a direct consequence of Theorem \ref{even}. We give an alternate,    homotopy-theoretic  proof  of Theorem \ref{terminal}  in Section \ref{sec5}.  
 \end{proof}
\section{Location of Rational Gottlieb Elements}
\label{sec3}
We begin  with  a simple  example of a non-terminal Gottlieb element, necessarily  of odd degree.  
\begin{example} \label{ex}
Consider the minimal model $(\land V, d)$ with $V = \langle u_2, v_2, x_3, z_3 , y_6 \rangle$, where subscripts denote degrees, and differential given on generators by
$$du = 0 = dv, dx = uv, dz = v^2, dy = -uv x + u^2 z.$$
Set $\theta(x) = 1$ and $\theta(y) = x$, and $\theta = 0$ on all other generators.  It is straightforward to check that, extended as a derivation of $\land V$, we have $[d, \theta] = 0$.   
\end{example}

 Observe that the even-degree element $y \in V^6$ is also a Gottlieb element in Example \ref{ex} as it is a terminal element.   Indeed if  $V$ is finite-dimensional then  we see that any $v \in V^N$ with $N = \mathrm{max}\{n \mid V^n \neq 0\}$ is  terminal and so a  Gottlieb element.    The converse to this observation is relevant to the realization problem for classifying spaces, in particular,  Theorem \ref{introthm: S4} above.  Namely, we note that   a space  having a non-vanishing rational homotopy element in a  degree that is higher  than the degree of all non-vanishing Gottlieb elements must be a $\pi$-infinite space.    
  
 Suppose now that     $(\land V, d)$ is  a  formal,  minimal DG algebra.  When  $V$ is finite-dimensional then, in the lower grading, we have
$V = V_0 \oplus V_1$ by  \cite[Th.2]{FH82}.   Conjecture \ref{conj: Jacobsson}  concerns the case when $V$ is infinite-dimensional.  Recall the assertion is     that a Gottlieb element  $v \in V^n$ should be of lower grading $\leq 1$, i.e.,  $v \in V_0 \oplus V_1$ with $V_0 = Z(V)$  the space of cycles.   The following argument is due to Dupont \cite[Pro.4]{Dupont}. 

\begin{proof}[Proof of Theorem \ref{J}]  
Let $X$ be  a formal space with Sullivan minimal model $(\land V, d)$   and suppose $H^*(X; \Q) \cong H(\land V, d)$ is  finitely generated.  Then $V_0$ is finite-dimensional.   Suppose    $y \in V^{2n}$  is a Gottlieb element.   We    prove that $dy = 0$ so that $y \in V_0$.  

Use the Gottlieb element $y$ and  Theorem \ref{even} to obtain a change of basis   $(\land V, d')$ so that, in $(\land V, d'),$ the generator $y$ does not appear in the differential $d'$ and the derivation $\theta'$ has $\theta'(y) = 1$ and $\theta' = 0$ on all other generators.  Further, we have $d'(y) = dy.$ 

Suppose $d'(y) \neq 0$ so that $y \not\in V_0.$   Write $V_0^{\mathrm{even}} = \langle z_1, \ldots, z_k \rangle.$   
Let $a_i$ be   of degree $2|z_i| -1$.   Extend  $(\land V, d')$ to a minimal DG algebra    $(\land V \otimes \land(a_1, \ldots, a_k), \delta)$ by setting  $\delta(a_i) = z^2_i$ and $\delta = d'$ on $\land V.$   We prove:  
\begin{lemma} \label{Hfinite} $\mathrm{dim}\,  H(\land V \otimes \land(a_1, \ldots, a_k), \delta) < \infty.$
\end{lemma}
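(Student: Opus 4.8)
The plan is to exploit formality to replace $(\land V, d')$ by its cohomology, to recognize the enlarged algebra as a Koszul complex, and then to invoke the standard finiteness property of Koszul homology over a Noetherian graded algebra. Throughout, write $A = H^*(X; \Q) \cong H(\land V, d')$. Note first that $[z_i] \in A$ is a well-defined class: applying $\delta^2 = 0$ to $a_i$ gives $d'(z_i^2) = 2 z_i\, d'(z_i) = 0$, and since $z_i$ has even degree it is not a zero-divisor in the free algebra $\land V$, so $d'(z_i) = 0$. Set $c_i = [z_i]^2 \in A$ and $I = (c_1, \ldots, c_k) \subseteq A$.

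First I would use formality. Since $(\land V, d')$ is isomorphic as a DG algebra to the minimal model $(\land V, d)$ of the formal space $X$, there is a quasi-isomorphism $\varphi \colon (\land V, d') \to (A, 0)$ with $\varphi(z_i) = [z_i]$. I extend $\varphi$ over the adjoined generators by $\varphi(a_i) = a_i$, which yields a map of DG algebras
$$\Phi \colon \big(\land V \otimes \land(a_1, \ldots, a_k), \delta\big) \longrightarrow \big(A \otimes \land(a_1, \ldots, a_k), \bar\delta\big), \qquad \bar\delta(a_i) = c_i,$$
with $\bar\delta = 0$ on $A$; it is a chain map precisely because $\varphi(z_i) = [z_i]$. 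Filtering both sides by the number of exterior $a$-factors, the associated graded of $\Phi$ is $\varphi \otimes \mathrm{id}$, a quasi-isomorphism by the K\"unneth theorem, and comparing the two spectral sequences shows $\Phi$ is itself a quasi-isomorphism. It therefore suffices to prove that the homology of the target is finite-dimensional.

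Next I would identify the target $(A \otimes \land(a_1, \ldots, a_k), \bar\delta)$ as the Koszul complex of the sequence $c_1, \ldots, c_k$ over $A$, so that its homology is the Koszul homology $H_*(c_1, \ldots, c_k; A)$. The decisive structural input is that $A$ is a finitely generated graded-commutative $\Q$-algebra --- its indecomposables are $V_0$, which is finite-dimensional by hypothesis --- and hence Noetherian. Because $X$ is formal, $A$ is generated as an algebra by the classes of $V_0$: the even classes $[z_1], \ldots, [z_k]$ together with finitely many odd classes. In the quotient $A/I$ each $[z_i]$ satisfies $[z_i]^2 = 0$ while each odd generator squares to zero automatically, so $A/I$ is spanned by the finitely many square-free monomials in these generators and is therefore finite-dimensional. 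Finally, the ideal $I$ annihilates every Koszul homology module; each such module is finitely generated over $A$ (the Koszul complex is a finite complex of finitely generated free $A$-modules and $A$ is Noetherian), hence finitely generated over the finite-dimensional ring $A/I$, hence finite-dimensional. Summing over the $k+1$ homological degrees gives $\dim H_*(c_1, \ldots, c_k; A) < \infty$, and via $\Phi$ this is the assertion of the lemma.

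The step I expect to be the main obstacle is the first reduction: formality only supplies a quasi-isomorphism of the original models, and one must check that it extends across the Koszul generators $a_i$ to a quasi-isomorphism of the enlarged algebras. This is exactly where the compatibility $\varphi(z_i) = [z_i]$ and the filtration-by-$a$-degree comparison of spectral sequences are needed. Once the problem is transported to $A$, the remaining finiteness is a routine consequence of the fact that killing the squares of the even algebra generators of a finitely generated graded-commutative algebra leaves an Artinian quotient, together with the standard annihilation property of Koszul homology.
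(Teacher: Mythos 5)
Your proof is correct and is essentially the paper's own argument: both pass via formality to a quasi-isomorphic complex $(\mathcal{H}\otimes\land(a_1,\ldots,a_k),\delta')$ with $\delta'(a_i)=[z_i]^2$, observe that the homology is a finitely generated $\mathcal{H}$-module annihilated by the classes $[z_i]^2$ (since $[z_i]^2\gamma=\delta'(a_i\gamma)$ for a cycle $\gamma$), and conclude it is finitely generated over the finite-dimensional quotient $\mathcal{H}/\langle [z_1]^2,\ldots,[z_k]^2\rangle$. Your Koszul-complex packaging, the explicit appeal to Noetherianity, and the filtration argument for extending the quasi-isomorphism over the $a_i$ merely spell out steps the paper asserts without detail.
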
 
\begin{proof}   Write $\mathcal{H} =  H(\land V, d')$ and 
$\mathcal{H}_0 = \mathcal{H} / \langle x_1^2, \ldots, x_k^2 \rangle.$   Since $(\land V, d')$  is formal we have  a quasi-isomorphism $(\land V, d') \simeq (\mathcal{H}, 0)$ and so a corresponding quasi-isomorphism 
$$(\land V \otimes \land(a_1, \ldots, a_k), \delta) \simeq (\mathcal{H}  \otimes \land(a_1, \ldots, a_k), \delta')$$
with $\delta'(a_i) = x_i^2$ (where $x_i$ now denotes the cohomology class represented by $x_i$) and $\delta' = 0$ on $\mathcal{H}.$  
Observe that   the graded algebra $\mathcal{H}  \otimes \land(a_1, \ldots, a_k)$ is  finitely generated as a module over $\mathcal{H}$.  Since $\mathrm{ker} \,  \delta'$ is preserved by the action of $\mathcal{H}$, we see  that the homology
$H(\mathcal{H}  \otimes \land(a_1, \ldots, a_k), \delta')$ is   a module over $\mathcal{H}$ as well, and finitely generated as such.   In the latter $\mathcal{H}$-module, the elements $x_i^2 \in \mathcal{H}$ act trivially.     For if $\gamma \in \mathcal{H}  \otimes \land(a_1, \ldots, a_k)$  is a $\delta'$-cycle  then $x_i^2 \gamma = \delta'(a_i \gamma)$ is a $\delta'$-boundary.    We conclude that $H(\mathcal{H}  \otimes \land(a_1, \ldots, a_k), \delta')$ 
is a  module over $\mathcal{H}_0$  and is finitely generated as such.  But $\mathcal{H}_0$ is   clearly finite-dimensional. Thus    
 $ H(\land V \otimes \land(a_1, \ldots, a_k), \delta)  \cong H(\mathcal{H}  \otimes \land(a_1, \ldots, a_k), \delta')$ is a finitely-generated module over a finite-dimensional algebra $\mathcal{H}_0$ and is thus finite-dimensional.  
 \end{proof} 

 Now consider  the derivation $\theta'$ of $(\land V, d')$.  Since $y$ does not appear in any differential $d'(v)$ for $v \in V'$, we may extend $\theta'$ to a derivation $\theta''$ of  $(\land V \otimes \land(a_1, \ldots, a_k), \delta)$, that satisfies $[\delta, \theta''] = 0$ and $\theta''(y) = 1$ by   setting  $\theta''(a_i) = 0$ for each $i$.   Now $\theta''$ displays $y$ as an even-degree Gottlieb element in the Sullivan minimal model  $(\land V \otimes \land(a_1, \ldots, a_k), \delta)$.  By Lemma \ref{Hfinite},  $(\land V \otimes \land(a_1, \ldots, a_k), \delta)$ has  finite rational L.S. category contradicting   \cite[Pro.29.8(ii)]{FHT}. We conclude that $dy = d'(y)= 0.$      
\end{proof}

 \begin{remark}   We note the need for Theorem \ref{terminal} in the preceding.  The extension of the derivation $\theta'$ to $\theta''$ requires $y \in V^{2n}$ to be a terminal element. Otherwise it is not clear how the derivation $\theta''$ is to be defined on the $a_i$.      \end{remark}

We  next apply our basis change formula to affirm  Conjecture \ref{2Nconjecture}  in a restricted case.    
 Recall our hypothesis in Theorem \ref{2N} is   that there is  an element $x \in V^{2n+1}$ that is a Gottlieb element and such that $dx$ a monomial. We introduce  notation for the latter hypothesis. 
Write $V = \langle u_1, \ldots, u_k \rangle$ in a basis with the $u_i$   in (non-strictly) increasing  order of degrees.  Then {\em $dx$ is a monomial} means we may  write 
$$dx = w_1 \cdots w_n$$
in which $w_j = u_{i_j}^{m_j}$ for $m_j \geq 1$.  We assume the indices $i_j$ are increasing and so the degrees of the $u_{i_j}$ are increasing.  Note  $m_j = 1$ when  $u_{i_j}$ has odd degree.  \begin{lemma} \label{w1} With notation as above, we have $dw_1 = 0.$ 
\end{lemma}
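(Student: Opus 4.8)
The plan is to deduce everything from the single relation $d^2x = 0$ together with minimality of $(\land V, d)$, without needing the Gottlieb hypothesis at all. Write $g = u_{i_1}$ and $a = |g|$, so that $w_1 = g^{m_1}$ and $dx = g^{m_1}P$ with $P = w_2\cdots w_n$; by the ordering convention $P$ involves only generators of degree $\geq a$ and does not involve $g$ itself. The first ingredient I would record is a degree observation: since the model is minimal, $dg$ is decomposable of degree $a+1$, so each of its monomials is a product of at least two generators of degree $\geq 2$, whence every generator occurring in $dg$ has degree at most $a-1$. In particular $dg$ lies in the subalgebra $\land V_{<a}$ generated by the generators of degree $<a$, and so $dg$ contains no factor of $g$ and no factor of any generator appearing in $P$.

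Next I would expand $0 = d(dx)$ by the Leibniz rule,
\[
0 = d\big(g^{m_1}P\big) = m_1\,g^{m_1-1}(dg)\,P \ \pm\ g^{m_1}\,dP ,
\]
and grade $\land V$ by the number of factors equal to the single generator $g$. By the degree observation, neither $dg$ nor $P$ carries a factor of $g$, so the first summand $m_1\,g^{m_1-1}(dg)\,P$ is homogeneous of $g$-weight exactly $m_1-1$; on the other hand every monomial of $g^{m_1}\,dP$ already contains the factor $g^{m_1}$, so it has $g$-weight at least $m_1$. The two summands therefore lie in distinct homogeneous pieces of this grading and must each vanish. Projecting onto $g$-weight $m_1-1$ yields $m_1\,g^{m_1-1}(dg)\,P = 0$, and since $m_1 \neq 0$ in $\Q$ we obtain $g^{m_1-1}(dg)P = 0$.

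Finally I would conclude $dg = 0$ using the factorization $\land V \cong \land V_{<a} \otimes \land V_{\geq a}$ of free graded-commutative algebras: here $dg \in \land V_{<a}$, while $g^{m_1-1}P \in \land V_{\geq a}$ is a nonzero monomial (the exponent conditions $m_j = 1$ on odd-degree generators guarantee nonvanishing), so their product in the tensor factorization can be zero only if $dg = 0$. Then $dw_1 = d(g^{m_1}) = m_1\,g^{m_1-1}\,dg = 0$, as claimed. The step I expect to be the crux is the separation by $g$-weight, and it is worth stressing that minimality enters there in an essential way: it is precisely the fact that $dg$ involves only generators of degree strictly below $a$ that both forces the first summand to sit in $g$-weight exactly $m_1-1$ and supplies the clean factorization used at the end; without the minimality hypothesis a linear term $g$ in $dg$ could spoil the separation.
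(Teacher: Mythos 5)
Your proof is correct and rests on the same mechanism as the paper's: expand $0=d^2x$ by the Leibniz rule and use the fact that, by minimality (and simple connectivity), the differential of a generator involves only generators of strictly smaller degree, so the two summands separate by the multiplicity of a distinguished generator of extreme degree and must vanish independently. The only difference is direction of attack --- the paper distinguishes the top generator $u_{i_n}$ and inducts downward through $d(w_1\cdots w_j)=0$, whereas you distinguish the bottom generator $u_{i_1}$ and extract $dw_1=0$ in a single step via the factorization $\land V\cong \land V_{<a}\otimes\land V_{\geq a}$; this is a slightly cleaner, induction-free execution of the same idea.
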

\begin{proof}
We have $$ 0 = d^2x = d(w_1\cdots w_n) =d(w_1\cdots w_{n-1}) (w_n) + (-1)^{|w_1| + \cdots + |w_{n-1}|}(w_1\cdots w_{n-1})dw_n .$$
Since $u_{i_n}$ is of maximal degree,   the  term  $u_{i_n}^{m_n}$ does not  appear in $(w_1\cdots w_{n-1})dw_n$.  We must have
that the two summands above are both zero.  In particular, $$d(w_1\cdots w_{n-1}) (w_n) = 0.$$   Since  $u_{i_n}$ cannot appear in  $d(w_1\cdots w_{n-1})$ we have, in fact,  $d(w_1\cdots w_{n-1}) = 0.$  The result now follows by induction. 
\end{proof}

\begin{proof}[Proof of Theorem \ref{2N}] 
 Suppose we are given a Sullivan minimal model $(\land V, d)$ with $H^q(\land V, d) = 0$ for $q > N$   and $x \in V^{2n+1}$
  a Gottlieb element such that $dx$ is a monomial.  We must prove $n \leq N-1.$  We perform  the basis   change  of Theorem \ref{odd} using $x \in V^{2n+1}$ to obtain an isomorphic Sullivan minimal model $(\land V, d')$.   We observe that $d'(x) = dx$ remains a monomial under this basis change.    
    
 Suppose now that  $n \geq N.$   Write $d'(x) = w_1\ldots w_n$ as above so that $d'(w_1) = 0$ by Lemma \ref{w1}.  
  Suppose first that $w_1$  is of odd degree.  Let $u = w_1x$ and observe that 
  $$d'(u) = d'(w_1x) = (-1)^{|w_1|}w^2_1 w_2\cdots w_n = 0$$
  since $w_1^2 = 0.$ Now $|u| > N$ and so $u$ must be a boundary of $(\land V, d').$
  However, this contradicts Theorem \ref{odd} (3).  If $x$ appears in a differential  $d'(\chi)$ for $\chi \in \land V$ then it must appear in a  term of the form $x\lambda(\chi)d'(x).$

  So suppose  $w_1$ is  of even degree.  In this case,  the identity
  $$0 = (d')^2x = d'(w_1 \cdots w_n) = (-1)^{|w_i|} w_1 d'(w_2\cdots w_n)$$
  implies  $d'(w_2 \cdots w_n)=0$.   Now, since $|d'(x)| = 2n+2>2N$,
  either $|w_1|> N$ or $|w_2\cdots w_n| > N.$  In the first case, $w_1$ is a cycle of $(\land V, d')$ of degree greater than  $N$  and so $w_1$ is a boundary.   If  $w_1 = d' (\eta)$ for $\eta \in \land V$ 
 then $u = x - \eta w_2 \cdots w_n$ is a cycle with $|u| > N$.    Note that  $u$ cannot be a boundary  
 as this contradicts the decomposability  of $d'.$  Similarly, if $w_2 \cdots w_n$ has degree $> N$ then
 it must bound, say $d'(\zeta) =   w_2 \cdots w_n.$  Again this gives a cycle $u = x  - (-1)^{|w_1|}w_1\zeta$ 
 with $|u| > N$ that cannot be exact.
 
 \end{proof}

 \section{ The realization problem for $S^{2n}$ }\label{sec4}
 The problem of  realizing  $S^{2n}$  as a classifying space,    of producing a   rational space $X_\Q$ with  $\B(X_\Q) \simeq   S_\Q^{2n}$,      translates, with Sullivan's identity (\ref{eqS}),     to a simple algebraic problem.   Namely, a solution to the realization problem for $S^{2n}$    is  a Sullivan minimal model $(\land V, d)$    giving a  Lie algebra isomorphism:   $$H_*(\Der(\land V), D) \cong  \pi_*(\Omega S^{2n}_\Q) \cong   \langle \iota_{2n-1}, [\iota_{2n-1}, \iota_{2n-1}] \rangle.$$ Here $\iota_{2n-1} \in \pi_{2n-1}(\Omega S^{2n}_\Q) \cong \pi_{2n-1}(S^{2n-1}_\Q)$ corresponds to the fundamental class.    
  
Throughout this section, we assume a solution to the realization problem is given. Specifically, we assume we have  a Sullivan minimal model  $(\land V, d)$  admitting   a non-bounding  derivation cycle $\theta_a \in \Der_{2n-1}(\land V)$     such that   $ [\theta_a, \theta_a]$ is non-bounding, as well,  and that these two classes  span the homology of derivations:   \begin{equation} \label{ab}  H_*(\Der (\land V), D) = \langle \theta_a, [\theta_a, \theta_a]  \rangle. \end{equation}    
Write $\theta_b = [\theta_a, \theta_a]$.
 We  make one  further hypothesis.    We assume     \begin{equation} \label{yGottlieb} \theta_b(y_0) = 1 \hbox{\,  for some \, } y_0 \in V^{4n-2}. \end{equation}  Note that the   hypothesis (\ref{yGottlieb}) corresponds to the case that $(\land V, d)$ has a nontrivial Gottlieb element   in degree $4n-2$.     
 
 We prove that assumptions (\ref{ab}) and (\ref{yGottlieb})  lead  to a contradiction when $n = 1,  2,$ and $3$.  For the latter two cases, we will make use of a combined version of our change of basis results  from Section \ref{sec2}.  
We begin with the following:  
\begin{lemma} \label{Gottliebx}   
There is $x \in V^{2n-1}$ with $\theta_a(x) = 1.$ 
\end{lemma}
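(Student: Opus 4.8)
The plan is to exploit the fact that, by hypothesis \eqref{ab}, the homology of the derivation complex is spanned by $\theta_a$ and $\theta_b = [\theta_a,\theta_a]$, and that $\theta_b$ is a Gottlieb-type derivation with $\theta_b(y_0)=1$ for some $y_0 \in V^{4n-2}$. Since $\theta_b = [\theta_a,\theta_a] = 2\,\theta_a\circ\theta_a$, evaluating on $y_0$ gives $2\,\theta_a\big(\theta_a(y_0)\big) = 1$. In particular $\theta_a(y_0)$ is a nonzero element of $\land V$ of degree $(4n-2)-(2n-1) = 2n-1$, and applying $\theta_a$ to it yields the nonzero scalar $\tfrac12$. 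The goal is to extract from this data a single generator $x \in V^{2n-1}$ on which $\theta_a$ evaluates to $1$.

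First I would write $w := \theta_a(y_0) \in (\land V)^{2n-1}$ and decompose it into its linear and decomposable parts, $w = w_{\mathrm{lin}} + w_{\mathrm{dec}}$, with $w_{\mathrm{lin}} \in V^{2n-1}$. The key observation is that $\theta_a$ lowers degree by $2n-1$, so for $\theta_a(w)$ to be a nonzero scalar (living in degree $0$), the contribution must come from the linear part: any decomposable term of $w$ is a product of at least two generators each of positive degree $< 2n-1$, and applying the degree-$(2n-1)$ derivation $\theta_a$ to such a product lands in strictly positive degree, hence cannot contribute to the scalar. Therefore $\theta_a(w_{\mathrm{lin}}) = \tfrac12 \neq 0$. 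Writing $w_{\mathrm{lin}} = \sum_i c_i v_i$ as a linear combination of basis elements $v_i \in V^{2n-1}$, linearity of $\theta_a$ forces $\theta_a(v_i) \neq 0$ for at least one $i$; since $\theta_a(v_i)$ has degree $0$ it is a scalar, and after rescaling that $v_i$ we obtain a generator $x \in V^{2n-1}$ with $\theta_a(x) = 1$, as desired.

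The step I expect to require the most care is the degree bookkeeping that isolates the linear part of $w$. One must check that no decomposable monomial in $w$ can produce a degree-$0$ output under $\theta_a$: a generic decomposable term is $v_{j_1}\cdots v_{j_r}$ with $r \geq 2$ and $\sum |v_{j_s}| = 2n-1$, and $\theta_a$ (a derivation of degree $2n-1$) applied to it gives $\sum \pm v_{j_1}\cdots \theta_a(v_{j_s})\cdots v_{j_r}$, each summand of degree $(2n-1) - (2n-1) = 0$ \emph{only if} the factor $v_{j_s}$ being hit has degree exactly $2n-1$, which is impossible since each factor has degree at most $2n-2$. Hence every summand has positive degree and contributes nothing to the constant term. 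This is elementary once stated correctly, but it is the crux, and it is exactly where the assumption that $\theta_a$ has degree $2n-1$ (matching half the degree of $y_0$) is used. The remaining passage from "$\theta_a(w_{\mathrm{lin}})=\tfrac12$" to "$\theta_a(x)=1$ for a single generator $x$" is just linear algebra.
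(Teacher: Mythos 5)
Your argument is correct and is essentially the paper's own proof: both evaluate $[\theta_a,\theta_a](y_0)=2\,\theta_a(\theta_a(y_0))=1$, split $\theta_a(y_0)$ into a linear part plus a decomposable part, observe that $\theta_a$ annihilates every decomposable element of degree $2n-1$, and extract a generator from the linear part on which $\theta_a$ evaluates to a nonzero scalar. One small wording slip: the summands coming from a decomposable monomial are not ``of positive degree'' --- each one vanishes outright, since $\theta_a(v_{j_s})=0$ whenever $|v_{j_s}|<2n-1$ because $\theta_a$ lowers degree by $2n-1$ --- but this does not affect the conclusion.
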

\begin{proof}
By (\ref{yGottlieb}) we have  $$ 1 =   \theta_b(y_0) =  [\theta_a, \theta_a](y_0)  = 2\theta_a(\theta_a(y_0)) = \theta_a\left(2\theta_a(y_0)\right)  .$$
 It follows that $2\theta_a(y_0)$ has an indecomposable summand, i.e.,   $2\theta_a(y_0) = x + \chi$ for some  $x  \in V^{2n-1}$ and $\chi$ decomposable in $\land V.$  Then $\theta_a(\chi)$ is decomposable and so, by the above equation, $\theta_a(\chi) =0.$ Thus  $\theta_a(x) = 1$,  as needed.     
\end{proof}  
Lemma \ref{Gottliebx}  already leads to a contradiction in the case  $n = 1$ since   $V^1 = 0$.  We   note a stronger statement can be made in this case (see Proposition \ref{S2}, below).      We assume now  that $n \geq 2$ 
\begin{lemma} \label{dx}  The element $x \in V^{2n-1}$ with $\theta_a(x) = 1$ satisfies $dx \neq 0.$  
\end{lemma}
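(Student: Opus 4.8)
The plan is to argue by contradiction: suppose $dx = 0$, and show this forces $H_*(\Der(\land V), D)$ to be strictly larger than the two-dimensional space $\langle \theta_a, \theta_b\rangle$ promised by the realization hypothesis \eqref{ab}, or else contradicts \eqref{yGottlieb}. The key leverage is that if $x \in V^{2n-1}$ is a \emph{cycle} with $\theta_a(x) = 1$, then $x$ is an odd-degree Gottlieb element that is also a cycle, so by the $dx = 0$ case of Theorem \ref{odd} (equivalently Remark \ref{rem:odd}(1)) we obtain a splitting
\begin{equation*}
(\land V, d') \cong (\land(x), 0) \otimes (\land W, d_0').
\end{equation*}
First I would invoke this decomposition and pass to the isomorphic model $(\land V, d')$, noting that $\theta_a$ transfers to a derivation cycle $\theta_a'$ with $\theta_a'(x) = 1$ and $\theta_a'(\land W) \subseteq x \cdot \land W$ by Theorem \ref{odd}(b).

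The heart of the argument is then to produce \emph{new} derivation cycle classes from this tensor splitting and count them against \eqref{ab}. In a tensor product $(\land(x), 0) \otimes (\land W, d_0')$ with $x$ a single odd generator not appearing in any differential, the derivation complex decomposes, and one can build derivations that manipulate the $\land(x)$ factor independently. The natural candidates are the derivation $x\tfrac{\partial}{\partial x}$ (degree zero, measuring $x$-multiplicity) and derivations sending a well-chosen cohomology generator of $(\land W, d_0')$ into multiples of $x$. Because $y_0 \in V^{4n-2}$ with $\theta_b(y_0) = 1$ survives as a Gottlieb element, the splitting constrains how $y_0$ decomposes relative to the $x$-factor, and I expect that combining $\theta_a'$ with the factor structure yields a derivation cycle not cohomologous to any element of $\langle \theta_a', \theta_b'\rangle$, contradicting the two-dimensionality in \eqref{ab}.

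An alternative and perhaps cleaner route is dimension-counting via rational L.S. category or via the structure of $\pi_*(\Omega S^{2n}_\Q)$. Since $\B(X_\Q) \simeq S_\Q^{2n}$, the fibre $X_\Q$ has $\pi_*(\Omega \B(X_\Q)) = \pi_*(\aut(X)_\Q)$ equal to the two-dimensional graded Lie algebra $\langle \iota_{2n-1}, [\iota_{2n-1}, \iota_{2n-1}]\rangle$. If $dx = 0$ gives a free tensor factor $(\land(x),0)$, then $x$ is a rational Gottlieb element of $X$ itself that is a genuine cycle, and I would examine what a nontrivial splitting of the model forces on the derivation Lie algebra — specifically whether it manufactures derivation cycles in degrees outside $\{2n-1, 4n-2\}$, or an extra class within those degrees. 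I would track the derivation $\varphi$ defined by $\varphi(x) = $ (some cocycle of $\land W$) and $\varphi(\land W) = 0$; the splitting guarantees such $\varphi$ is a cycle, and its degree depends on the lowest-degree cohomology of $\land W$ above the bottom.

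The main obstacle I anticipate is the bookkeeping needed to show the candidate new derivation cycle is genuinely \emph{non-bounding} and genuinely \emph{independent} of $\theta_a'$ and $\theta_b' = [\theta_a', \theta_a']$ in homology — it is easy to write down a derivation cycle, but ruling out that it is a boundary $[d', \psi]$ or a linear combination of the two known classes requires understanding $D'$ on the full derivation complex of the split model. I expect the cleanest resolution exploits that $\theta_a'(\land W) \subseteq x\cdot\land W$ together with $\theta_a'(x) = 1$: applying $\theta_a'$ repeatedly should detect the $x$-filtration, and the existence of $y_0$ with $\theta_b'(y_0)=1$ in degree $4n-2$ pins down enough of the low-degree structure of $\land W$ to force a third independent homology class. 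If that direct count proves delicate, the fallback is to show $dx=0$ makes $x$ split off a rational sphere or odd-sphere factor from $X_\Q$, incompatible with $\B(X_\Q)$ being a rational even sphere by the rigidity already used in the surrounding arguments.
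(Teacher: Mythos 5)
Your opening move---assume $dx=0$ and invoke Remark~\ref{rem:odd}(1) to split off $(\land(x),0)$ as a tensor factor---is exactly the paper's first step. But from there you go hunting for a \emph{third} independent class in $H_*(\Der(\land V),D)$, and you concede you cannot certify that any of your candidates is non-bounding and independent of $\theta_a$ and $\theta_b$; your fallback (splitting off a rational sphere from $X_\Q$ and appealing to unspecified ``rigidity'') is equally unfinished. So the proposal never closes, and the missing idea is not a delicate count but a one-line observation about the \emph{bracket}: no third class is needed. Once $x$ appears in no differential, the derivation $(x,1)$ (sending $x\mapsto 1$ and vanishing on $\land W$) is a $D$-cycle, and it is non-bounding because a boundary $[d,\eta]$ sends $x$ to $d(\eta(x))$, which is decomposable by minimality and hence never equals $1$. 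By hypothesis \eqref{ab} the only non-bounding classes in degree $2n-1$ are the multiples of $\theta_a$, so $[(x,1)]=c[\theta_a]$ with $c\neq 0$, and therefore $\bigl[[(x,1)],[(x,1)]\bigr]=c^2[\theta_a,\theta_a]$ must be non-zero in homology. But $(x,1)\circ(x,1)=0$ on the nose, since $(x,1)$ kills $\land W$ and sends $x$ to a scalar. That contradiction with \eqref{ab} is the whole proof: the splitting does not manufacture an extra class, it forces the unique degree-$(2n-1)$ class to have vanishing self-bracket, which is incompatible with $H_*(\Der(\land V),D)\cong\pi_*(\Omega S^{2n}_\Q)$ having a non-trivial Whitehead square at the bottom. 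Your instinct to ``detect the $x$-filtration by applying $\theta_a'$ repeatedly'' is circling this fact, but as written the argument has a genuine gap where the contradiction should be.
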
  
\begin{proof}
If $dx = 0$   then     by Remark \ref{rem:odd} (1) we  have  a factorization of DG algebras:  $$(\land V, d) \cong (\land(x), 0) \otimes (\land W, d).$$   By the above factorization, we see   $(x, 1)$ is a non-bounding derivation cycle of degree $2n-1$. However, $[(x, 1), (x, 1)] = 0$ which is a contradiction of   (\ref{ab}).     \end{proof}

 We now apply our work in  Section \ref{sec2}.   First we   make the  change of basis  $\phi \colon (\land(W, x), d') \to (\land V, d)$ in 
\eqref{eq: odd change of basis}  using the Gottlieb element   $x \in V^{2n-1}$.  In the notation of Theorem \ref{odd},   we see that
$\phi(y_0) = y_0$  since $y_0 \in W$.  Let $y = 2y_0.$  Remark \ref{rem:odd} (2) implies $$\lambda(y) = \frac{1}{2}[\theta_a, \theta_a]_0(2y_0) =   \theta_b(y_0) =1.$$
By Theorem \ref{odd} (b), we have  $$\theta'(a)(y)  = \lambda(y)x = x.$$

We next apply a basis change as in Theorem \ref{even} except in this case we use the derivation $\lambda \in \Der_{4n-2}(\land V)$.  
Write  $V = \langle x\rangle \oplus  \langle y\rangle \oplus \widehat{V}$, so that $W = \langle y\rangle \oplus \widehat{V}$ in the notation of Theorem \ref{odd}.  First define a linear map $\psi \colon \land V \to \land V$ by    setting
$$\psi(x) = 0, \quad \psi(y) = 0, \quad \text{and} \quad \psi(\chi) = - y \lambda(\chi)$$
for $\chi \in \land \widehat{V}$, and then extending $\psi$ multiplicatively (so that the ideal of $\land V$ generated by $x$ and $y$ is in the kernel of $\psi$).  Then $\psi$ is a degree-zero, locally nilpotent derivation of $\land V$.  We may then  exponentiate $\psi$ to an isomorphsim %
$$\phi' = \mathrm{exp}(\psi) = \mathrm{id} + \psi + \frac{1}{2!}\, \psi^2  + \frac{1}{3!}\, \psi^3 + \cdots \colon \land V \to \land V$$
 Setting 
$d'' = \phi'^{-1}\circ d' \circ \phi',$ we  obtain a DG algebra isomorphism 
$$\phi' \colon (\land V, d'') \to (\land V, d')$$
We transfer the derivation cycle $\theta_a'$ to this DG algebra by  $\theta_a'' = \phi'^{-1}\circ \theta_a' \circ \phi'$.  Then $[d'', \theta_a''] = 0.$  We prove:
\begin{proposition}\label{prop: odd even gottlieb structure}
 Let  $(\land V, d'')$ and   $\theta_a'' \in \Der_{2n-1}(\land V)$ be as constructed above.    Write $V = \langle x\rangle \oplus W$ where $W =   \langle y\rangle \oplus \widehat{V}$.  Then    \begin{itemize}
\item[(a)]  $d''x = d'x = dx$ and   $\theta_a''(x) = 1, \ \   \theta_a''(y) = x, \hbox{\, and, \,} \theta_a''(\chi) = 0$ for $\chi \in \land \widehat{V}$.  \\
\item[(b)]   $y$ does not appear in the differential $d''(w)$ for any $w \in V$    \\
\item[(c)]    Given $\chi \in \land W$ and decompose $d''$ in the form  $d''(\chi) = xd_1''(\chi) + d_0''(\chi)$
 for derivations $d''_1$ and $d''_0$ of $\land W $.  We then have 
  $$d''_1(y) = - dx \hbox{\, and \,} 
d_1''(\chi)  = 0 \hbox{\,  for $\chi \in \land \widehat{V} $}$$ 
 (namely, $x$ does not appear in any differential other than that of $y$, where it occurs only in  the term $-x dx$).
\end{itemize}
\end{proposition}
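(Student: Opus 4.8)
The plan is to leverage the two basis changes already constructed—the odd-degree change $\phi$ from Theorem~\ref{odd} (using $x$) followed by the even-degree exponential change $\phi'$ (using the derivation $\lambda$)—and to read off each of (a), (b), (c) by tracking how the relevant derivations transform. Part~(a) should be essentially free: the formula $d''x = dx$ follows because both $\phi$ and $\phi'$ fix $x$ (we set $\phi(x)=x$ and $\psi(x)=0$, so $\phi'(x)=x$), exactly as in Theorem~\ref{odd}(a) and Theorem~\ref{even}(a); the values $\theta_a''(x)=1$, $\theta_a''(y)=x$, and $\theta_a''(\chi)=0$ for $\chi\in\land\widehat V$ are computed just as in Theorem~\ref{even}(b), using that $\phi'=\exp(\psi)$ with $\psi(\chi)=-y\lambda(\chi)$ and the relation $\theta'_a(y)=x$ established just before the proposition, combined with $\theta'_a=0$ on $\land\widehat V$. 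The key algebraic input is that $\lambda$ and $\theta'_a$ interact so that the telescoping cancellation of Theorem~\ref{even}(b) goes through; I expect this to reduce to the same finite-sum identity already verified there.

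For part~(b), I would argue exactly as in Theorem~\ref{even}(c): since $y$ is an even-degree generator and $\theta_a''$ satisfies $\theta_a''(y)=x$ with $\theta_a''$ a $D''$-cycle, applying $\theta_a''\circ d''=d''\circ\theta_a''$ to any generator $w$ and expanding $d''(w)$ in powers of $y$ forces the coefficient of each positive power of $y$ to vanish. The one wrinkle here is that $\theta_a''(y)=x$ rather than $\theta_a''(y)=1$, so the cancellation argument of Theorem~\ref{even}(c) does not transcribe verbatim; instead I would use the \emph{even-degree} structure provided by $\lambda$. The cleaner route is to observe that the second basis change $\phi'$ was built from $\lambda$ precisely to make $y$ terminal with respect to the $\lambda$-derivation, so that (b) is really an instance of Theorem~\ref{even}(c) applied with the role of the Gottlieb derivation played by the appropriate even-degree cycle. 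I would make this explicit by checking that $y$ is a terminal element for $(\land V, d'')$, appealing to Theorem~\ref{terminal} if needed to justify that the extension is consistent.

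For part~(c), I would decompose $d''(\chi)=x\,d_1''(\chi)+d_0''(\chi)$ as in the observation at the top of the proof of Theorem~\ref{odd}, and then identify $d_1''$. The claim $d_1''(y)=-dx$ should follow by applying the derivation cycle relation $[d'',\theta_a'']=0$ together with $\theta_a''(y)=x$ and part~(a): expanding $d''\theta_a''(y)=d''(x)=dx$ against $\theta_a''d''(y)$ and matching the coefficient of $x$ pins down $d_1''(y)$. The vanishing $d_1''(\chi)=0$ for $\chi\in\land\widehat V$ is the statement that $x$ does not appear in differentials of $\widehat V$-elements; this should come from $\theta_a''(\chi)=0$ on $\land\widehat V$ fed into the same commutator identity, forcing the $x$-linear part of $d''(\chi)$ to vanish.

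The main obstacle I anticipate is part~(b), specifically reconciling the two successive basis changes so that the cancellation argument survives the fact that $\theta_a''(y)=x\neq 1$. The clean Theorem~\ref{even}(c) argument relies on the Gottlieb derivation sending its generator to a \emph{scalar}, which lets one separate powers of $y$ cleanly; here $\theta_a''(y)=x$ mixes degrees, so I expect to need the auxiliary even-degree derivation $\lambda$ (with $\lambda(y)=1$) as the true ``Gottlieb'' input for terminality of $y$, and to verify carefully that $\lambda$ transports correctly under $\phi'$ and commutes appropriately with $d''$. Establishing that compatibility—essentially that $y$ remains a genuine even-degree Gottlieb element after the odd change of basis, so Theorem~\ref{even} applies to it—is where the real care is required; once that is in hand, (a) and (c) are routine coefficient extractions from the commutator identities.
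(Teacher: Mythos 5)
Your overall strategy is the paper's: perform the two basis changes and extract (a)--(c) from the commutator identities. Parts (a) and (c) go through essentially as you describe. Indeed, your derivation of $d_1''(y)=-dx$ by applying $[d'',\theta_a'']=0$ to $y$ (using (b) to see $d_1''(y),d_0''(y)\in\land\widehat V$, so that $\theta_a''$ annihilates them and $\theta_a''(d''(y))=d_1''(y)$) is a touch more direct than the paper's, which instead pushes $d'(y)=-x\,dx+d_0'(y)$ from Theorem~\ref{odd}(c) through $\phi'^{-1}$. One slip in your sketch of (a): $\theta_a'$ is \emph{not} zero on $\land\widehat V$ --- Theorem~\ref{odd}(b) gives $\theta_a'(\chi)=x\,\lambda(\chi)$ --- so what must be shown is $\lambda\bigl(\phi'(\chi)\bigr)=0$ for $\chi\in\land\widehat V$, which is exactly the telescoping identity of Theorem~\ref{even}(b) run with $\lambda$ in place of $\theta$; since you invoke that identity anyway, this is cosmetic.

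The substantive point is (b), where you correctly isolate the obstacle ($\theta_a''(y)=x$ is not a scalar) but leave the fix as a promissory note. The paper's device is to run the Theorem~\ref{even}(c) argument with the derivation $[\theta_a'',\theta_a'']=2\,\theta_a''\circ\theta_a''$, which is automatically a $D''$-cycle, sends $y$ to the scalar $2$, and vanishes on $x$ and on $\land\widehat V$ by part (a); expanding $d''(v)$ in powers of $y$ and using $[\theta_a'',\theta_a'']\circ d''(v)=d''\circ[\theta_a'',\theta_a''](v)=0$ then kills every positive power of $y$. Your alternative --- exhibit $y$ as an honest even-degree Gottlieb element of $(\land V,d')$ with witness $\lambda$ and quote Theorem~\ref{even}(c) --- does work, but the ``careful verification'' you defer is precisely that $\lambda$, extended by zero on $x$, is a $D'$-cycle. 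This is not given by Theorem~\ref{odd}; it follows from the computation $\theta_a'\circ\theta_a'(\chi)=\theta_a'\bigl(x\,\lambda(\chi)\bigr)=\lambda(\chi)$ for $\chi\in\land W$ (using $x^2=0$), which identifies your $\lambda$ with $\tfrac12[\theta_a',\theta_a']$ and hence makes it a cycle for free. So the two routes use the same trick --- the square of the odd derivation serves as the even-degree Gottlieb witness for $y$ --- applied on opposite sides of the basis change $\phi'$; the paper's version costs nothing to justify, while yours needs the one-line identification above to be complete.
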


\begin{proof}
(a) Note  $d'(x) = dx$ by Theorem \ref{odd} (a)  while $d''(x) = d'(x)$ since  $\psi(x) = 0$. Since  $\psi(y) = 0$,  it is immediate that $\phi'(x) = \phi'^{-1}(x) = x$ and $\phi'(y) = \phi'^{-1}(y) = y$.  It then follows that we have $\theta_a''(x) = 1$
and  $\theta''(y) = x$.  For $\chi \in \land \widehat{V}$, we have 
$$\phi'(\chi) = \chi + \psi(\chi) + \frac{1}{2!}\, \psi^2(\chi)  + \frac{1}{3!}\, \psi^3(\chi) + \cdots.$$
But notice that $\psi^2(\chi) = \psi\big(-y\lambda(\chi)\big) = (-y)(-y) \lambda^2(\chi)$, since  $\psi(y) = 0$.  Generally, we have
$$\psi^k(\chi) =  (-1)^{k} y^k  \lambda^k(\chi),$$
for $k \geq 1$.  It follows that
$$\phi'(\chi) = \chi + \sum_{k \geq 1}\ (-1)^k\, \frac{1}{k!}\,y^k \lambda^k(\chi).$$
Then, applying the derivation $\lambda$, with $\lambda(y) = 1$, we have 
$$
\begin{aligned}
\lambda\big(\phi'(\chi)\big) &= \lambda(\chi) - \lambda(\chi) - y \lambda^2(\chi)  \\
& \hbox{\hskip1truein}+ \sum_{k \geq 2}\ (-1)^k\left( \, \frac{1}{(k-1)!}\,y^{k-1} \lambda^k(\chi) + \frac{1}{k!}\,y^k \lambda^{k+1}(\chi)\right)\\
&= \lambda(\chi) - \lambda(\chi) - y \lambda^2(\chi) + y \lambda^2(\chi) \\
&\hbox{\hskip1truein}+ \sum_{k \geq 3}\ (-1)^k\, \frac{1}{(k-1)!}\,y^{k-1} \lambda^k(\chi) + \sum_{k \geq 2}\ (-1)^k\,\frac{1}{k!}\,y^k \lambda^{k+1}(\chi)\\
&= \lambda(\chi) - \lambda(\chi) - y \lambda^2(\chi) + y \lambda^2(\chi) \\
&\hbox{\hskip1truein}+ \sum_{k \geq 2}\ \left((-1)^{k+1} \, \frac{1}{k!}\,y^{k} \lambda^{k+1}(\chi) + (-1)^k \,\frac{1}{k!}\,y^k \lambda^{k+1}(\chi)\right)\\
&= 0.
\end{aligned}
$$
Finally, this gives $\theta_a''(\chi) = \phi'^{-1}\circ \theta_a' \circ \phi'(\chi) = \phi'^{-1}\big( x\lambda\big( \phi'(\chi)\big)\big) = 0$, as claimed.

(b) Let $v$ be a generator of degree at least $4n-3$ and write 
$$d''(v) = \chi_0 + y \chi_1 + \cdots + y^k \chi_k,$$
for some $k \geq 1$ and each $\chi_i \in \land(x) \otimes \land(\widehat{V})$, that is, not containing terms that involve $y$. The derivation $[\theta_a'', \theta_a''] = 2 \theta_a''\circ\theta_a''$ satisfies $[\theta_a'', \theta_a''](y) = 2$.    Now $[\theta_a'', \theta_a''](v) = 0$ for  $v \in \widehat{V}$ by (a).  Also, because $[\theta_a'', \theta_a'']$ is a cycle with respect to $D'' = \ad(d'')$, we have 
$$[\theta_a'', \theta_a'']\circ d''(v) = d''\circ [\theta_a'', \theta_a''] (v) = 0,$$
and hence we have
$$0 = 0 + 2\lambda \chi_1 + 4 \lambda y \chi_2 + \cdots + 2k \lambda y^{k-1} \chi_k.$$
Here, we use the fact that $[\theta_a'', \theta_a''] (\chi_i) = 0$ for each $i$.   But each of these terms must be zero independently of each other, since the $\chi$ do not involve $y$, an even-degree generator of $\land V$.  Hence, each $\chi_i = 0$ for $i = 1, \ldots, k$, and we have $d''(v) = \chi_0$, which does not involve $y$.  For generators $v$ of degree $4n-4$ and lower, $d''(v)$ cannot involve $y$ for degree reasons.  So $y$ does not appear in $d''$, as claimed.  

(c) Suppose given $v \in \widehat{V}^k$  with $k > 2n-2$.  Write 
$$d''(v) = x V_1 + V_2,$$
for $V_1, V_2 \in \land \widehat{V}$---recall that we just showed in (b) that  $y$ does not occur in the differential of any generator.    Then we have $\theta_a''(dv) = V_1$, since $\theta_a''(\widehat{V}) = 0$.   On the other hand, we have $\theta_a''(v) = 0$, for the same reason, and because $\theta_a''$ is a $D''$-cycle  we have 
$$0 = d''\circ\theta_a''(v) = - \theta_a''\circ d''(v) = - V_1.$$
Thus   $d''(v) = V_2 \in \land\widehat{V}$,  and $x$ does not appear in $d''(v)$.    For generators $v$ of degree $2n-2$ and lower, $d''(v)$ cannot involve $x$ for degree reasons.
Finally, from part (b) of Theorem \ref{odd}, we have $d'(y) = -x\,dx + d'_0(y)$, and $d'_0(y)$ is a decomposable term, since the original $d$ was decomposable. It follows that  $\phi'^{-1}\big(-x\,dx + d'_0(y)\big) = -x\,dx + d'_0(y)$, and hence $d''(y) = -x\,dx + d''_0(y)$.   
\end{proof}

We apply   Proposition \ref{prop: odd even gottlieb structure},    to  obtain a differential $d''$ on $\land V$.  Then   $y \in V^{4n-2}$ does not appear in a differential $d''(w)$ for any $w \in V$, i.e. $y$ is terminal in $(\land V, d'')$. We may write \begin{equation} \label{d''} d''(y) = - x\,d''(x) + d''_0(y),\end{equation}
(here using that $d''(x) = dx$) where  $d''_0(y) \in \land W$ (so not involving $x$).  Furthermore, $x$ does not appear in the differential $d''(v)$ for $v$ in the  subspace $\widehat{V}$  complementary   to $y$ in $V$.

\begin{proposition}  \label{dim}  Let  $n =2 $ or $3$.   Let  $(\land V, d'')$ be the  Sullivan minimal model described above.  Then   $$\dim \, H_*(\Der(\land V), D)> 2.$$ 
 \end{proposition}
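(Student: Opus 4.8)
The plan is to work entirely inside the explicit model $(\land V, d'')$ produced by \propref{prop: odd even gottlieb structure} and equation \eqref{d''}, where $\land V = \land(x)\otimes\land(y)\otimes\land\widehat{V}$ with $x\in V^{2n-1}$, $y\in V^{4n-2}$, the differential restricts to a derivation $\hat d$ of $\land\widehat{V}$, one has $d''(x)=dx\in\land\widehat{V}$, and $d''(y)=-x\,dx+d_0''(y)$ with $d_0''(y)\in\land\widehat{V}$. The idea is to produce a $D$-cycle representing a nonzero class of $H_*(\Der(\land V),D)$ in a degree \emph{other} than $2n-1$ and $4n-2$. Since the classes $\theta_a$ (degree $2n-1$) and $\theta_b=[\theta_a,\theta_a]$ (degree $4n-2$) already occupy those two degrees, any such class forces $\dim H_*(\Der(\land V),D)>2$.

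The source of the extra class is the subspace $C\subseteq\Der(\land V)$ of derivations $\Theta$ with $\Theta(x)=0$, $\Theta|_{\widehat{V}}=0$, and $\Theta(y)\in\land\widehat{V}$. A direct check, using $dx\in\land\widehat{V}$, the shape of $d''(y)$, the fact that $d_0''(y)$ does not involve $y$ (degree reasons), and $\hat d(\land\widehat{V})\subseteq\land\widehat{V}$, shows that $C$ is a subcomplex on which $D$ acts by $\Theta\mapsto\big(y\mapsto\hat d(\Theta(y))\big)$; thus $(C,D)$ is isomorphic, up to the degree shift $|y|=4n-2$, to $(\land\widehat{V},\hat d)$, and $H_*(C,D)\cong H^*(\land\widehat{V},\hat d)$ shifted. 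Now \lemref{dx} gives $dx\neq 0$, and since $dx$ is a decomposable element of degree $2n$ in $\land\widehat{V}$ its factors force generators of $\widehat{V}$ in degrees below $2n$; for $n=2,3$ the minimal-degree generator $g\in\widehat{V}$ has degree $2$ or $3$. Being of minimal degree, $g$ is automatically a $\hat d$-cocycle, and since $(\land\widehat{V},\hat d)$ is minimal it is not a $\hat d$-coboundary. Hence the derivation $\Theta_g$ (sending $y\mapsto g$ and every other generator to $0$) is a $D$-cycle of degree $4n-2-|g|$, a value distinct from both $2n-1$ and $4n-2$, and it represents a nonzero class in $H_*(C,D)$.

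It remains to promote this to a nonzero class in the \emph{full} complex, that is, to show $\Theta_g$ is not $D\Phi$ for any $\Phi\in\Der(\land V)$, and I expect this to be the main obstacle. A primitive $\Phi$ would have to satisfy $D\Phi(y)=g$ while $D\Phi$ vanishes on every other generator. Analyzing $D\Phi(y)=d''(\Phi(y))-(-1)^{|\Phi|}\Phi(d''y)$ by degrees, the only way the indecomposable $g$ can appear is through a term of $d_0''(y)$ of the shape $u\cdot g'$ with $u$ a generator of degree $4n-1-|g|$ on which $\Phi$ takes a constant value. The strategy is to rule this out by bounding the degrees of generators that can occur in $d_0''(y)$ and imposing the remaining conditions $D\Phi=0$ on the other generators; it is precisely here that $n\le 3$ enters, keeping $|g|\in\{2,3\}$ and the partner degree $4n-1-|g|$ small enough to make the required configuration incompatible with minimality of $(\land V,d'')$. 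If a single $\Theta_g$ proves stubborn, the fallback is to consider the whole family $\{\Theta_\omega : \omega\in\land\widehat{V}\text{ a low-degree cocycle}\}$ and show that the induced map $H^*(\land\widehat{V},\hat d)\to H_*(\Der(\land V),D)$ is nonzero in the relevant degree by a rank comparison against the finitely many boundary relations that a primitive can impose. Either way, exhibiting one surviving class yields a third independent element of $H_*(\Der(\land V),D)$ and hence $\dim H_*(\Der(\land V),D)>2$.
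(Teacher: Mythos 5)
Your setup is sound as far as it goes: the subspace $C$ of derivations supported on $y$ with values in $\land\widehat{V}$ is indeed a subcomplex (using that $y$ is terminal, that $x$ occurs only in $-x\,dx$ inside $d''(y)$, and that $d''$ restricts to $\land\widehat{V}$), and the cycle $\Theta_g=(y,g)$ for a minimal-degree generator $g$ is a nonzero class in $H_*(C,D)$ sitting in a degree different from $2n-1$ and $4n-2$. These are essentially the cycles $(y,v_j)$ with which the paper's proof also begins. But the proposal stops exactly where the real content of the proposition lies. Nonvanishing in $H_*(C,D)$ says nothing about nonvanishing in $H_*(\Der(\land V),D)$: a primitive $\Phi$ with $D\Phi=\Theta_g$ has no reason to lie in $C$, and in fact the paper never proves that any individual $(y,v_j)$ is non-bounding. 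You acknowledge that ruling out such a $\Phi$ is "the main obstacle" and offer only a speculative degree-counting strategy plus an unspecified "rank comparison" fallback; neither is carried out, and there is no reason to expect a single fixed cycle $\Theta_g$ to survive in every model satisfying the hypotheses.

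The paper's route is structurally different and shows why your direct approach is unlikely to close. It argues by contradiction: assuming $\dim H_*\leq 2$, every such cycle in an exotic degree \emph{is} exact, and \lemref{exact} converts each assumed primitive $\eta_j$ into a structural constraint on the differential (a quadratic term $v_jv_j^*$ in $dy$ with $\eta_j(v_j^*)=1$). These constraints are then fed into further hand-built cycles --- $\gamma=(y,x)-\sum\alpha_j\eta_j$ for $n=2$, and the chain $\zeta_j$, $\rho_j$, $\alpha$, $\sigma$, $\beta$, $\gamma$ for $n=3$, with a case split on $\dim V^2$ --- whose assumed exactness eventually contradicts \propref{prop: odd even gottlieb structure}(c), namely that $x$ appears in $d''(y)$ only through $-x\,dx$. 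So the non-bounding class is produced only at the end of an iterated bootstrap, not identified in advance. To repair your argument you would need to supply the analogue of \lemref{exact} and this cascade of auxiliary cycles; as written, the proposal establishes only the existence of a candidate, not of a third homology class.
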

 \begin{proof}
 For convenience in the proof,  we omit double subscripts and write $d = d''$.     We begin with the case $n = 2$. By Lemma \ref{dx}, we may suppose that $dx \not = 0$.     Write $V^2 = \langle v_1, ..., v_r \rangle$ and note that, for degree reasons, $V^2$ consists of cycles: $d(v_j) = 0$ for each $j = 1, \ldots, r$.  Now $dx \in \land^2 V^2$, so choose and fix elements $\alpha_j \in V^2$ for which
$$dx = v_1 \alpha_1 + \cdots + v_r \alpha_r$$
(some, but not all, of the $\alpha_j$ may be zero).    For each $v_j \in V^2$, the derivation $(y, v_j)$ is a $D$-cycle since $y$ does not appear in the differential $d$.   As a cycle of degree $4$, each $(y, v_j)$ must be exact or the inequality is achieved.  We record a consequence of this exactness in the following  form.   
  \begin{lemma} \label{exact} 
Let  $z \in V^t$ and $v\in V^s$ with $t > s$  and suppose given a derivation  of the form  $\theta = (z, v) + \theta'$ with $\theta'(z) = 0.$ If $\theta$ is a $D$-boundary, $\theta = D(\eta),$   then there exists $v^* \in V^{t+1-s}$ such that  $$dy = -x dx +  v v^* + \alpha,$$
for $\alpha \in \land \overline{V}$ where $V = \langle x, v^* \rangle \oplus \overline{V}.$
 The  derivation $\eta$ satisfies   $\eta(v^*) = 1.$
\end{lemma} 
\begin{proof}  
  Consider the expression   $v = D(\eta)(y)  =  d \eta(y) + (-1)^{|\eta|}\eta(dy)$.   Since $d$ is decomposable,  so is $d\eta(y)$ and so this term vanishes.  Thus $v = (-1)^{|\eta|} \eta(dy)$.  Thus $dy$ must contain a quadratic term $vv^*$ with $v^* \in V^{t-s+1}$ and with  $\eta(v^*) = \pm 1.$  
 The form given for the differential $dy$ follows from Proposition \ref{prop: odd even gottlieb structure}.   \end{proof} 

Returning to our case,   each degree $4$ derivation cycle $(y, v_j)$  must be exact, say $(y, v_j) = D(\eta_j)$.  By Lemma \ref{exact},   there exist  (independent)  $v_j^* \in V^5$  such that 
$$dy = -x dx + \sum_{j=1}^r\ v_j v_j^* + \beta,$$
 and  $\eta_j(v_j^*) = 1$.  Here $\beta$ is in $\land^7(\overline{V})$ with $\overline{V}$   complementary to $\langle v_1^*, \ldots, v_r^* \rangle$ in $V$.  

Now define a degree $3$ derivation
$$\gamma =  (y, x) - \alpha_1 \cdot \eta_1 - \cdots - \alpha_r \cdot \eta_r$$
Notice that, since $d(\alpha_j)$, we have $$D(\alpha_j \cdot  \eta_j) = \alpha_j D(\eta_j) = \alpha_j (y, v_j) = (y, v_j \alpha_j).$$  Also, since $y$ does not appear in the differential $d$, we have $D(y, x)  = (y, dx)$.  Hence,  %
$$D(\gamma) = (y, dx) - (y, v_1 \alpha_1) - \cdots - (y, v_r \alpha_r) = 0.$$
Thus, $\gamma$ is a $D$-cycle.  Since  $\theta_a(x) = 1$, $\gamma$ is not a multiple of $\theta_a$.  Thus $\gamma$   must be exact.  Applying Lemma \ref{exact} with $z= y, v =x,$ we   conclude that $dy$ contains a term of the form $x x^*$ for an indecomposable $x^*$ of degree $3$ with
$\eta(x^*) = 1$.  We have arrived at a contradiction: other than in the term $x dx$, the generator $x$ does not occur in $dy$.

For the case $n = 3$, we  have  $x \in V^5$ and $y \in V^{10}.$  By Lemma \ref{dx}, we   again assume $dx \neq 0.$ 
If  $V^2 = 0,$    then $dx \in \land^{2} V^3$ and we may define the derivation $\gamma$   and proceed to a contradiction exactly as above.

Suppose then that there is some    $v \in V^2$.   We define a degree $3$ derivation $$ \zeta = (x, v) + (y,vx).  $$ We claim $\zeta$ is a $D$-cycle.      Since $dv = 0$, and since  $y$ is terminal, we see $D(y, vx) = (y, vdx)$.  Since $x$ only appears in the differential $dy$ and there as the term $-xdx$, we see $D(x, v)   = (x, v) \circ d =  (y, -vdx).$    Thus $D(\zeta) =0$ and so, for degree reasons, $\zeta$ is a $D$-boundary. Write    $\zeta = D(\rho)$ for $\rho \in \
\Der^4(\land V)$.  By  Lemma \ref{exact},       $dx$ contains a term  $vv^*$ for $v^* \in V^4$ with $ \rho (v^*) = 1$.  Further, we have  that $D(\rho)(y) = vx$ with $D(\rho)$ vanishing on a complementary subspace of $\langle x, y \rangle$ in $V.$

If  $V^2 = \langle v \rangle$ is one-dimensional,   write $d(v^*) =  vw$ for $w \in V^3$.  Note that  $dw = 0$. 
The $D$-cycle $(y, v)$ must be  a boundary,  so write $(y, v) = D(\eta)$ for $\eta$ of degree $9$.   Then,  set   $$\beta =  (y, v^*) -  w \cdot \eta.$$
 We see that  $\beta$ is $D$-cycle of degree $6$  that must be  a $D$-boundary.  By Lemma \ref{exact} again,     this implies  a term $v^* v^{**}$  appears  in $dy$ with  $v^{**} \in V^7.$ But  $\rho(v^*) = 1$ and so  	$D(\rho)(y) = \rho(dy)$ has a summand  $v^{**}$   which contradicts   $D(\rho)(y) =  vx$.    
 
It remains to handle the case $V^2 = \langle v_1, v_2, \ldots, v_n\rangle$ for $n \geq 2.$  Here we   begin by defining  $D$-cycles  
 $\zeta_j = (y,v_jx)+ (x, v_j)$ of degree $3$ as above.  These must each   bound which implies we have  derivations $\rho_j$  of degree  4 with $D (\rho_j) = \zeta_j$.  Using Lemma \ref{exact}, we    may write  
 $$dx =  v_1v_1^* + \cdots + v_nv_n^* + \chi$$ for   $v_j^*  \in V^4$ where $\chi \in \land \overline{V}$ and $\overline{V}$ is complementary to $\langle v_1^*, \cdots, v_n^* \rangle$ in $V$.   We  have   $\rho_j(v_j^*) = 1.$ 
 Consider the derivation  $$\alpha = v_1\cdot \rho_2 - v_2 \cdot \rho_1$$ of degree $2.$ We see    
    $$D(\alpha)(x) = v_2 v_1 - v_1v_2 = 0 \hbox{\,  and \, } D(\alpha)(y) = v_1v_2x - v_2v_1x = 0.$$ 
  Since $D(\rho_1) = \zeta_1$ and $D(\rho_2) =\zeta_2$ and the latter vanish on a complementary subspace to $\langle x,y \rangle$ in $V$ so does $D(\alpha).$
Thus    $\alpha$ is a $D$-cycle which,  for degree reasons,  must be  a $D$-boundary.  Write $\alpha = D(\sigma)$ for $\sigma \in \Der_3(\land V)$.  Then  $D(\sigma)(v_1^*) = \alpha(v_1^*) = -v_2$ and $D(\sigma)(v_2^*) = \alpha(v_2^*) = v_1$.  From this we deduce   there is $w \in V^3$ with $\sigma(w) = 1$ and 
 the term $-wv_2$ appears in $d(v_1^*)$ while the term  $wv_1$ appears in $d(v_2^*).$   
 
Finally,  consider the $D$-cycles of degree $8$ given by $(y, v_j)$ for $ j = 1, \ldots, r.$  Since these must be $D$-boundaries we obtain degree $9$ derivations $\eta_j$   satisfying $D(\eta_j) = (y, v_j)$.  Since  $w \in V^3$  we may write 
$dw = \alpha_1v_1 + \cdots + \alpha_rv_r$ for some elements $\alpha_j \in V^2$.   Define a degree $7$ derivation 
  $$\gamma=   (y, w) - \sum_{j=1}^{r} \alpha_j \eta_j.$$  We directly check that  $\gamma$ is a $D$-cycle and so a $D$-boundary.    Then, by Lemma \ref{exact} applied with $z = y$ and $v = w$,      we must have a generator $w^* \in V^8$ such that $dy$ contains the term $ww^*.$  Since $\sigma(w) = 1$ we see that  $w^*$ occurs in $D(\sigma)(y)$ as an indecomposable summand. 
  On the other hand, we have    $$D(\sigma)(y) = \alpha(y)  = v_1\cdot \rho_2(y) - v_2 \cdot \rho_1(y)$$  is  decomposable since $\rho_j(y)$ is of degree $6$.  This contradiction completes the proof.  
 \end{proof}

\begin{proof}[Proof of Theorem \ref{introthm: S4}]  Suppose given a simply connected space $X$ satisfying $$\B(X_\Q) \simeq  S_\Q^{2n}$$  for $n = 1,2, 3.$ The Sullivan minimal model $(\land V, d)$ for $X$ then satisfies $(\ref{ab})$ and so we have  $\theta_a \in \Der_{2n-1}(\land V)$ and   $\theta_b = [\theta_a, \theta_a] \in \
\Der_{4n-2}(\land V)$ which are non-bounding derivation cycles.   If  $X$ has a non-vanishing rational Gottlieb element in degree $> 2n-1$ then   there is a derivation cycle  $\theta \in \Der_q(\land V)$  with $\theta(v) = 1$ for some $v \in V^q$.     The minimality condition for $d$ implies  $\theta$ cannot be a $D$-boundary and so, since  $q > 2n-1,$   we must have $ q = 4n-2$ and $\theta = c\, \theta_b$ for some $c \neq 0$.  Thus, taking $y_0 = c v \in V^{4n-2}$ we have $\theta_b(y_0) = 1.$  When $ n =1$, we have the contradiction $V^1 = 0$.    Proposition \ref{dim}   gives the result for $n = 2$ and $3.$    
\end{proof}

\section{The Universal Fibration and the Realization Problem}\label{sec5}
Question \ref{ques: Baut} naturally leads to  consideration of the universal fibration.   For observe that  $Y_\Q \simeq \B(X_\Q)$  for some space $X_\Q$ implies the existence of a   fibration  
$$X_\Q \to E_{X_\Q} \to \B(X_\Q) \simeq Y_\Q$$
that is universal for fibrations with fibre $X_\Q$.  We consider this expanded view here.  Write  $\partial_\infty \colon \Omega \B(X_\Q) \to X_\Q$ for the connecting homomorphism.       We recall   $\alpha \in \pi_n(X_\Q)$ is a Gottlieb element if and only if $\alpha = (\partial_\infty)_\sharp(\beta)$ for some $\beta \in \pi_{n+1}(\B(X_\Q))$ \cite[Th.2.6]{Got}.

Suppose  now  we have such universal  fibration for $Y = S^2$.  That is, suppose we have a simply connected space $X$ with  $\B(X_\Q) \simeq S^2_\Q$.      By  Theorem \ref{introthm: S4},  $X$ has  vanishing rational Gottlieb elements.  By Gottlieb's result,  mentioned above, $p_{X_\Q} \colon E_{X_\Q} \to \B(X_\Q)$ induces a surjection on rational homotopy groups. We can also see this directly    from the long-exact homotopy sequence of the universal fibration. 
As a consequence, we deduce:        

\begin{proposition} \label{S2}  Suppose $X$ is a simply connected and satisfies $\B(X_\Q) \simeq  S_\Q^2$.  Then the universal fibration    $X_\Q \to  E_{X_\Q} \to \B(X_\Q)$ with fibre $X_\Q$ has a section.    \end{proposition}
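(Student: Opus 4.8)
The plan is to construct the section directly on rational homotopy, exploiting the especially rigid homotopy type of $S^2_\Q$. First I would record that the total space $E_{X_\Q}$ is a simply connected rational space: it is simply connected since both the fibre $X_\Q$ and the base $S^2_\Q$ are, and its homotopy groups are $\Q$-vector spaces because they sit in the long exact sequence between the rational groups $\pi_*(X_\Q)$ and $\pi_*(S^2_\Q)$. By Theorem \ref{introthm: S4} in the case $n=1$, the space $X$ has vanishing rational Gottlieb elements, so by the characterization recalled just above the connecting map $\partial = (\partial_\infty)_\sharp$ is zero and $p_\sharp$ is onto; in any case surjectivity of $p_\sharp \colon \pi_2(E_{X_\Q}) \to \pi_2(S^2_\Q) \cong \Q$ already follows from $\pi_1(X_\Q) = 0$.

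Next I would lift a generator. Fix a generator $\alpha \in \pi_2(S^2_\Q)$, represented by the rationalization $\ell \colon S^2 \to S^2_\Q$, and choose $\widetilde{\alpha} \in \pi_2(E_{X_\Q})$ with $p_\sharp(\widetilde{\alpha}) = \alpha$. Since $E_{X_\Q}$ is rational, the map $\ell$ induces a bijection $[S^2_\Q, E_{X_\Q}] \xrightarrow{\ \cong\ } [S^2, E_{X_\Q}] = \pi_2(E_{X_\Q})$; let $s \colon S^2_\Q \to E_{X_\Q}$ be the map corresponding to $\widetilde{\alpha}$, so that $s \circ \ell \simeq \widetilde{\alpha}$ and hence $s_\sharp(\alpha) = \widetilde{\alpha}$.

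It remains to verify $p \circ s \simeq \mathrm{id}_{S^2_\Q}$. On $\pi_2$ we have $(p s)_\sharp(\alpha) = p_\sharp(\widetilde{\alpha}) = \alpha$, so $p s$ acts as the identity on $\pi_2(S^2_\Q)$. The point is then a rigidity statement for self-maps of $S^2_\Q$: working in the Sullivan minimal model $(\land(x,y), d)$ with $|x| = 2$, $|y| = 3$ and $dy = x^2$, any DG algebra endomorphism sends $x \mapsto c\,x$ and is then forced to send $y \mapsto c^2 y$, since $y$ spans the degree-$3$ part and its image is pinned down by compatibility with $d$. Thus a self-map of $S^2_\Q$ is determined up to homotopy by its degree $c$ on $\pi_2$, and degree $1$ forces it to be homotopic to the identity. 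Applying this to $p s$ gives $p s \simeq \mathrm{id}$, so $s$ is the desired section.

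The crux, and the only step that is special to this base, is the final rigidity argument: promoting a single lifted homotopy class $\widetilde{\alpha}$ to an honest section succeeds precisely because self-maps of $S^2_\Q$ are pinned down by their action on $\pi_2$, a feature of the two-generator model of the even sphere that would fail for a more complicated base. I expect the identification $[S^2, E_{X_\Q}] = [S^2_\Q, E_{X_\Q}]$ and this rigidity to be where care is needed, while everything else is formal. Alternatively, one could give a model-level proof, exhibiting the section as the algebra retraction of the relative model $(\land(x,y) \otimes \land V, D)$ onto $(\land(x,y), d)$ that kills $V$; this is a cochain map exactly because the vanishing of rational Gottlieb elements forces the purely base component of $D$ on $V$ to vanish.
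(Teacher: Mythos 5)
Your argument is correct and follows essentially the same route as the paper: the paper's proof is the single sentence that a section is obtained by lifting the identity homotopy class of $S^2_\Q \simeq \B(X_\Q)$ through $p_{X_\Q}$, justified by the preceding observation (via Theorem~\ref{introthm: S4} and Gottlieb's characterization of the connecting map, or, as you and the paper both note, directly from the exact sequence since $\pi_1(X_\Q)=0$) that $p_{X_\Q}$ is surjective on rational homotopy. Your write-up supplies the details the paper leaves implicit---the universal property of rationalization to promote the lifted $\pi_2$-class to a map out of $S^2_\Q$, and the rigidity of self-maps of the model $(\land(x,y),\,dy=x^2)$ to conclude that the composite with $p$ is homotopic to the identity.
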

 \begin{proof}  
 We obtain a  section  of $p_{X_\Q} \colon E_{X_\Q} \to \B(X_\Q)$ as the lift of  the identity homotopy class  $S_\Q^2 \simeq \B(X_\Q) \to \B(X_\Q)$  to $E_{X_\Q}$.   
\end{proof}
 Proposition \ref{S2} gives a strong restriction on a rational space $X_\Q$ satisfying  $\B(X_\Q) \simeq S^2_\Q$.  Namely,   $X_\Q$ must have the property that every rational fibration $X_\Q \to E_\Q \to B_\Q$ has a section.     We give an example to show   that it is possible to have a non-trivial, sectioned  fibration with base $S^2$ and  fibre $X$  such that $X$ has no rational Gottlieb elements.  
\begin{example}\label{ex: sectioned fibration}
Let $p\colon S^2 \vee S^2 \to S^2$ denote the projection onto the first summand.  Convert $p$ into a fibration, to obtain a fibre sequence
$$\xymatrix{ X \ar[r]^-{j} & S^2 \vee S^2 \ar[r]^-{p} & S^2.}$$
Then $p$ admits the obvious section $i_1\colon S^2 \to S^2 \vee S^2$ which is just inclusion into the first summand.  Also, the fibration is not trivial, even after rationalization, since $H^*(S^2 \vee S^2; \Q)$ does not split as a tensor product $H^*(S^2; \Q) \otimes H^*(X;\Q)$.  Next, note that $X$ has infinitely many non-zero rational homotopy groups since, from the long exact sequence in rational homotopy, we have  $\pi_i(X_\Q) \cong \pi_i((S^2 \vee S^2)_\Q)$ for $i \geq 4$, and $S^2 \vee S^2$ has infintely many  non-zero rational homotopy groups.  
 To see that  $X$ has no rational Gottlieb elements,   we apply the  mapping theorem  for rational L.S. category.    Note that that the fibre inclusion $j \colon X \to S^2 \vee S^2$  induces an injection of rational homotopy groups.   Thus,  implies $\cat0(X) \leq \cat0( S^2 \vee S^2) = 1$    \cite[Th.28.6]{FHT}.  Now  $X$  is $\pi$-infinite   and so it follows that $X$ has the rational homotopy type of a wedge of at least two spheres.  Thus, the homotopy Lie algebra $\pi_*(\Omega X_\Q)$ is a free Lie algebra on at least two generators.  It follows directly that $X_\Q$ has no  nontrivial Gottlieb elements; for recall    a Gottlieb element $\alpha \in \pi_n(X_\Q)$ has vanishing Whitehead products $[\alpha, \beta] = 0$ for all $\beta \in \pi_q(X_\Q).$  
\end{example}   
The rationalization of the fibration of \exref{ex: sectioned fibration} cannot be a universal  fibration.  We  argue as follows.   First   by  a Serre spectral sequence argument,  $H^i(X;\Q) \not= 0$ for $i \geq 2$. Since $\mathrm{cat}_0(X) = 1$,   $X_\Q$ is  an infinite wedge of spheres.  We prove:  

\begin{proposition} 
Suppose that $X$ has  the rational homotopy type of a wedge of infinitely many spheres. Then   $\B(X_\Q)$ is $\pi$-infinite.  \end{proposition}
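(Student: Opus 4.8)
The plan is to exhibit explicitly that the derivation Lie algebra of the Sullivan model for a wedge of infinitely many spheres has infinite-dimensional homology, and then invoke Sullivan's identity \eqref{eqS} to conclude that $\B(X_\Q)$ is $\pi$-infinite. If $X$ is rationally a wedge of infinitely many spheres, then its rational homotopy Lie algebra $\pi_*(\Omega X_\Q)$ is a free graded Lie algebra on infinitely many generators, and its Sullivan minimal model $(\land V, d)$ has $V$ infinite-dimensional. I would first record the key structural fact: since $X_\Q$ is a wedge of spheres, it is coformal and in fact has the rational homotopy type of a space with $\cat_0(X) = 1$, so the model can be taken in a particularly tractable form (for instance, $V$ concentrated in the degrees of the spheres together with the degrees forced by Whitehead products, with $d$ quadratic).

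The main step is to produce infinitely many linearly independent homology classes in $(\Der(\land V), D)$. The cleanest approach is to use the degree-zero derivations, or more robustly the derivations $(v_i, 1)$ and their relatives, and show that infinitely many of them survive to homology. Concretely, for each generator $v \in V$ of top-degree type (a terminal generator in the sense of \thmref{terminal}), the derivation sending $v \mapsto 1$ and all other generators to $0$ is a $D$-cycle, because $v$ being terminal means it appears in no differential, so $D(\theta_v) = [d,\theta_v] = \pm \theta_v \circ d = 0$. Since $X_\Q$ is a wedge of spheres, every sphere's fundamental generator is a cycle and is terminal, yielding infinitely many such $\theta_v$; minimality of $d$ guarantees none of these indecomposable-hitting derivations can be a $D$-boundary, so they give infinitely many independent classes in $H_*(\Der(\land V), D)$.

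I would then argue that these classes are genuinely independent in homology: a boundary $D(\eta) = \eta \circ d \pm d \circ \eta$ always has decomposable image on indecomposables when $d$ is decomposable, so a derivation taking a generator to the unit $1 \in \land V$ (which is as indecomposable as possible) cannot be exact. This is precisely the minimality argument used repeatedly in the paper, e.g.\ in the proof of \thmref{introthm: S4}. Hence $\dim H_*(\Der(\land V), D) = \infty$, and by Sullivan's identity \eqref{eqS} we get $\dim \pi_*(\Omega \B(X_\Q)) = \infty$, so $\dim \pi_*(\B(X_\Q)) = \infty$, i.e.\ $\B(X_\Q)$ is $\pi$-infinite.

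The main obstacle I anticipate is bookkeeping rather than conceptual: one must choose the model for the infinite wedge of spheres carefully so that the terminal generators are manifestly available and so that the claimed $D$-cycles are genuinely non-bounding for all but finitely many degrees. In particular, for a wedge of even-dimensional spheres one gets terminal even-degree Gottlieb elements directly from \thmref{terminal}, while for odd-dimensional spheres the free Lie algebra structure introduces iterated Whitehead products (higher generators of $V$) whose differentials must be checked not to involve the fundamental classes; the quadratic, coformal structure of the model keeps this under control. The only real care needed is to confirm that infinitely many of the $\theta_v$ land in distinct homological degrees or are otherwise independent, which follows because the spheres occur in infinitely many degrees (or, if in finitely many degrees, there are infinitely many independent generators in a fixed degree, and the corresponding $\theta_v$ are then independent as a finite-degree vector space of cycles modulo the boundaries, the latter being constrained by decomposability).
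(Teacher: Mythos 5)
There is a genuine gap, and it is fatal to the construction rather than a matter of bookkeeping: the minimal Sullivan model of a wedge of two or more spheres has \emph{no} terminal generators, so the derivations $(v,1)$ you propose are never $D$-cycles. Indeed, if $v$ were terminal then $(v,1)$ would be a derivation cycle with $(v,1)(v)=1$, exhibiting $v$ as a Gottlieb element of $(\land V,d)$; but the homotopy Lie algebra of a wedge of at least two spheres is a free graded Lie algebra on at least two generators, which has trivial center, so there are no nonzero Gottlieb elements at all --- this is exactly the argument the paper itself makes in \exref{ex: sectioned fibration}. Concretely, already for $S^3\vee S^3$ the model has $V^3=\langle a,b\rangle$ with $da=db=0$ and a generator $x\in V^5$ with $dx=ab$, so the ``fundamental'' generators $a,b$ are not terminal; the higher generators encoding iterated Whitehead products all have the lower generators appearing in their differentials, and this persists in every degree. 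The point you flagged as needing to be checked --- that the differentials of the higher generators do not involve the fundamental classes --- is precisely what fails. Your appeal to \thmref{terminal} also misfires: that theorem says even-degree Gottlieb elements, \emph{when they exist}, are terminal; it does not produce any, and here there are none.

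The classes that actually detect $\pi_*(\B(X_\Q))$ must be derivations carrying one generator to \emph{another generator}, not to $1$. The paper works on the Quillen side: for $X$ a wedge of spheres, $\B(X_\Q)$ is modeled by $\bigl(s\L(V)\oplus \Der\,\L(V),\delta\bigr)$ with $\L(V)$ free on generators $v_i$, zero internal differential, and $\delta(sx)=\ad(x)$. The derivations $\theta_i$ defined by $\theta_i(v_0)=v_i$ (for the infinitely many $v_i$ of degree strictly greater than $|v_0|$) and $\theta_i=0$ on the other generators are automatically $\delta$-cycles, and they cannot bound because every boundary $\ad(x)$ sends $v_0$ into bracket length at least two. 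This yields infinitely many nonzero homotopy groups of $\B(X_\Q)$ with no case analysis. If you insist on staying on the Sullivan side via \eqref{eqS}, the analogous candidates are derivations $(w,v)$ with $w,v\in V$, but then you must argue that enough of them survive to $H_*(\Der(\land V),D)$ modulo boundaries of a nontrivial differential; that is precisely the work your sketch defers, and it is substantially harder than the Quillen-side computation.
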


\begin{proof}
We note that Gatsinzi has proved essentially this fact for $X_\Q$ a finite wedge (cf. \cite{Gat1}). 
  We use the model for $\B(X_\Q)$ described in \cite[VII.2]{Tanre} that derives from a Quillen model of $X$.  Since $X$ is a wedge of spheres, it has Quillen model a free Lie algebra $\L(V)$ with zero differential and $V$ a graded vector space of generators isomorphic to the (de-suspension of the) reduced homology of $X$.  We assume this homology is infinite-dimensional, and so write $V = \{ v_i \}_{i \in \N}$ with the generators $v_i$ written in order of increasing degree: thus $i < j \implies |v_i| \leq |v_j|$.   Now the model for $\B(X_\Q)$ is a DG Lie algebra written as
$$(s\L(V) \oplus \Der\  \!  \L(V), \delta),$$
where $s\L(V)$ denotes the abelian Lie algebra on the vector space $\L(V)$ with degrees shifted up by one, $\Der\  \! \L(V)$ denotes the usual Lie algebra of derivations that increase degree, except that in degree $1$ we restrict to just the cycles. The differential $\delta$ restricts to the usual differential on $\Der\ \! \L(V)$ which, since we assume $X$ is a wedge of spheres, is zero here.  The only non-trivial differentials, therefore, are of elements from $s \L(V)$, where we have $\delta( sx) = \ad(x) \in \Der\ \! \L(V)$, for $x \in \L(V)$.  See \cite[VII.2(6)]{Tanre} for details.    Suppose that $v_n \in V$ is the first generator of degree strictly greater than that of $v_0$.  Then we define an infinite sequence of non-zero derivations $\theta_i \in \Der\ \!\L(V)$ for $i \geq n$ by setting $\theta_i(v_0) = v_i$ and $\theta_i = 0$ on all other generators, and then extending $\theta_i$ as a derivation to an element of $\Der\ \!  \L(V)$.  Then each $\theta_i$ is a $\delta$-cycle in   $s\L(V) \oplus \Der\ \! \L(V)$ that cannot be exact, since any boundary $\ad(x)$ will map $v_0$ to elements of bracket length at least two in $\L(V)$.  The homology of the Quillen model $(s\L(V) \oplus \Der\ \! \L(V), \delta)$ gives the   homotopy of $\B(X_\Q)$, which therefore is non-zero in infinitely many degrees.
\end{proof}

Our main result in this section is  a further example with the same  conclusion as Proposition \ref{S2}.

\begin{theorem} \label{S3}
Suppose  $X$ is a simply connected space with $$\B(X_\Q) \simeq \left(S^3 \vee \cdots \vee S^3\right)_\Q   \hbox{\, (a wedge of two or more copies of $S^3$).}$$ Then the universal fibration $X_\Q \to  E_{X_\Q} \to \B(X_\Q)$    has a section. 
\end{theorem}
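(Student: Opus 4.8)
The plan is to produce a section of $p_{X_\Q}\colon E_{X_\Q}\to \B(X_\Q)$ as the lift of the identity map $\B(X_\Q)\to \B(X_\Q)$, exactly as in the proof of \propref{S2}. By the homotopy lifting property, such a lift of the identity class exists precisely when the obstruction to lifting it through $p_{X_\Q}$ vanishes; and for a rationalized simply connected fibration this reduces to the statement that $p_{X_\Q}$ is surjective on rational homotopy groups, equivalently that $\partial_\infty\colon \Omega\B(X_\Q)\to X_\Q$ induces the zero map on $\pi_*(\,\cdot\,)\otimes\Q$. By \cite[Th.2.6]{Got} (as recalled above), the image of $(\partial_\infty)_\sharp$ is exactly the set of rational Gottlieb elements of $X_\Q$. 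So the entire theorem reduces to showing that \emph{$X_\Q$ has no nontrivial rational Gottlieb elements}.

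First I would translate the hypothesis $\B(X_\Q)\simeq(S^3\vee\cdots\vee S^3)_\Q$ into Sullivan's identity \eqref{eqS}: the homology of the derivation Lie algebra $H_*(\Der(\land V),D)$ must be isomorphic, as a graded Lie algebra, to $\pi_*(\Omega(S^3\vee\cdots\vee S^3)_\Q)$, which is the free graded Lie algebra on $k\geq 2$ generators in degree $2$. Now a rational Gottlieb element in degree $q$ corresponds to a derivation cycle $\theta\in\Der_q(\land V)$ with $\theta(v)=1$ for some indecomposable $v\in V^q$; by minimality of $d$ such a $\theta$ cannot be a $D$-boundary, so it represents a nonzero class in $H_q(\Der(\land V),D)$. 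Thus a nontrivial Gottlieb element would give a nonzero homology class in $H_*(\Der(\land V),D)$ that has the special feature of being detected by evaluation on an indecomposable, i.e.\ lying in the image of the evaluation/Gottlieb map. The strategy is to show that no class in the free Lie algebra $\L$ on generators of degree $2$ can arise this way, most directly by producing a contradiction at the level of Whitehead products: a Gottlieb element $\alpha$ satisfies $[\alpha,\beta]=0$ for all $\beta$, whereas in the homotopy Lie algebra $\pi_*(\Omega X_\Q)$ the relevant brackets are forced to be nonzero.

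The key computation, then, is to identify $\pi_*(\Omega X_\Q)$, or at least its low-degree structure and its center, well enough to see that its Gottlieb elements (equivalently, the center elements realized by $\partial_\infty$) all vanish. Here I would exploit the structure theorem of \secref{sec4}: were there a nonzero Gottlieb element, \propref{dim} and the surrounding analysis show the derivation homology would be too large to match a free Lie algebra of the prescribed rank. Concretely, I expect to run an argument parallel to the proof of \thmref{introthm: S4}: assume a nonvanishing rational Gottlieb element, feed the resulting derivation cycle into the combined odd/even basis-change machinery of \propref{prop: odd even gottlieb structure}, and construct extra independent $D$-cycles in $\Der(\land V)$ forcing $\dim H_*(\Der(\land V),D)$ to exceed the dimension compatible with a finite wedge of $3$-spheres, contradicting \eqref{eqS}.

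The main obstacle will be the last step: controlling $H_*(\Der(\land V),D)$ from below in the wedge-of-spheres case is more delicate than in the $S^{2n}$ case because the target Lie algebra is free of infinite dimension, so one cannot simply count dimensions globally. I expect the cleanest route is to work one Gottlieb degree at a time and use the vanishing-Whitehead-product characterization together with the fact, recorded in the preceding discussion, that $X_\Q$ must itself be a wedge of spheres (via the mapping theorem for $\cat_0$, as in \exref{ex: sectioned fibration}); once $\pi_*(\Omega X_\Q)$ is a free Lie algebra on at least two generators, every nonzero element has a nonvanishing bracket with some other element, so the center is trivial and there are no nontrivial Gottlieb elements. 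Converting that center computation into a rigorous statement about $\partial_\infty$, rather than just about $X_\Q$ in isolation, is the point requiring the most care, and is where I would concentrate the detailed argument before invoking \propref{S2}-style lifting to conclude.
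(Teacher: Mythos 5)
Your reduction of the theorem to a statement about Gottlieb elements is sound in outline (and in fact you only need to kill the degree-$2$ Gottlieb elements: since the base is a wedge of $3$-spheres, the identity lifts as soon as $(p_{X_\Q})_\sharp$ is onto $\pi_3$, which by exactness is precisely the vanishing of the image of $\partial_\infty$ in $\pi_2(X_\Q)$). The genuine gap is in how you propose to rule those Gottlieb elements out. Your main route is: $X_\Q$ is a wedge of spheres by the mapping theorem, hence $\pi_*(\Omega X_\Q)$ is free on at least two generators, hence has trivial center, hence no Gottlieb elements. But the mapping theorem requires a map out of $X_\Q$ that is injective on rational homotopy into a space of known small $\cat0$. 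The only candidate is the fibre inclusion $X_\Q \to E_{X_\Q}$, and its $\pi_*$-injectivity is \emph{equivalent} to the vanishing of $\partial_\infty$, i.e.\ to the very conclusion you are trying to establish; moreover nothing is known about $\cat0(E_{X_\Q})$. This is exactly the difference between \exref{ex: sectioned fibration}, where the total space and the section are given in advance, and the present situation, where neither is. Your fallback — mimicking \propref{prop: odd even gottlieb structure} — does not apply either: that machinery is built around the specific pair $\theta_a$, $[\theta_a,\theta_a]$ occurring for $S^{2n}$, and as you yourself note, you have no way to bound $\dim H_*(\Der(\land V),D)$ from below against an infinite-dimensional free target.

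The missing idea is the one the paper isolates as \lemref{Z}. A Gottlieb element in degree $2$ is automatically a $d$-cycle (simple connectivity), so by \thmref{even} (or \cite[Lem.1]{Hal88}) it splits off a polynomial factor $(\land V,d)\cong(\land(y),0)\otimes(\land W,d)$ with $y\in V^2$, and the dual derivation $\theta$ gives a nonzero class $[\theta]\in H_2(\Der(\land V),D)$. \lemref{Z} then says that if $[\theta]$ is not a zero divisor in $H_*(\Der(\land V),D)$, then $H_{\geq 2}(\Der(\land V),D)=\langle[\theta]\rangle$ is one-dimensional. Since $H_*(\Der(\land V),D)\cong\pi_*(\Omega(S^3\vee\cdots\vee S^3)_\Q)$ is a free graded Lie algebra on $k\geq 2$ generators of degree $2$, it has no zero divisors and is infinite-dimensional in degrees $\geq 2$: both alternatives fail, giving the contradiction. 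Without \lemref{Z} (or some substitute that converts the split generator $y$ into global control of the derivation homology), your argument does not close.
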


We prove a preliminary result on  the homology of derivations of  a Sullivan minimal model  having a free factor generated by an element of even degree.  We say  an element $x$ in  a graded Lie algebra $L$ is a {\em zero-divisor} of $L$ if there exists $y \in L$ with $y \not\in \langle x \rangle$ such that $[x, y] = 0.$

\begin{lemma}\label{Z}
Let $(\land V, d)$ be a  Sullivan minimal model admitting a DG algebra factorization
$(\land V, d) \cong (\land (y), 0) \otimes (\land W, d)$ for  some $y \in V^{2m}.$    Let $\theta \in \Der_{2m}(\land V)$ be dual to $y.$   If  the homology class $[\theta]$ is not a zero divisor in $H_*(\Der (\land V), D)$, then  $$H_{\geq 2m}(\Der(\land V), D) = \langle [\theta] \rangle.$$   
\end{lemma}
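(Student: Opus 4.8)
The plan is to exploit the extra grading that the given factorization provides. Writing $\land V=\land(y)\otimes\land W$, I would grade $\land V$ by \emph{$y$-weight}, declaring $y^k\land W$ to have weight $k$. This is an algebra grading, and since $dy=0$ and $d(\land W)\subseteq\land W$ the differential $d$ has weight $0$; hence $D=\ad(d)$ preserves weight, and $(\Der(\land V),D)$ splits degreewise as a sum of subcomplexes $(\Der^{(j)},D)$, where $\Der^{(j)}$ consists of the derivations shifting $y$-weight by $j$. A derivation can lower weight by at most one (it can strip off at most one $y$), so $j\geq-1$, and the theorem reduces to computing the homology of each weight piece in degrees $\geq 2m$.

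First I would pin down the weight $-1$ piece. A weight $-1$ derivation must vanish on $W$ and carry $y$ into $\land W$, so $\varphi\mapsto\varphi(y)$ identifies $\Der^{(-1)}$ with $\land W$ up to a degree shift, $\Der^{(-1)}_q\cong(\land W)^{2m-q}$. Under this identification $D$ becomes $d$ on $\land W$ (again using $dy=0$ and $d(\land W)\subseteq\land W$), so $H_q(\Der^{(-1)},D)\cong H^{2m-q}(\land W,d)$. Since $\land W$ is connected this is exactly $\langle[\theta]\rangle$ in degree $2m$, represented by $\theta$ (which sends $y$ to the generator $1$ of $H^0$), and $0$ in all degrees $>2m$. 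Thus the weight $-1$ part already accounts for the entire claimed answer, and it remains to kill the pieces of weight $j\geq0$ in degrees $\geq 2m$.

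This is where the hypothesis enters. The operator $\ad(\theta)=[\theta,-]$ lowers weight by one and raises degree by $2m$, and it commutes with $D$ because $\theta$ is a $D$-cycle; hence on homology $\ad([\theta])$ maps $H_q(\Der^{(j)})$ into $H_{q+2m}(\Der^{(j-1)})$. I would then induct on $j\geq0$. Given a nonzero class $z\in H_q(\Der^{(j)})$ with $q\geq2m$ and $j\geq0$, the class $z$ sits purely in weight $j\neq-1$, so it is not a scalar multiple of $[\theta]$; since $[\theta]$ is not a zero-divisor, $\ad([\theta])(z)\neq0$. But $\ad([\theta])(z)$ lives in $H_{q+2m}(\Der^{(j-1)})$ with $q+2m>2m$, which vanishes by the weight $-1$ computation when $j=0$ and by the inductive hypothesis when $j\geq1$. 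This contradiction forces $z=0$, so $H_{\geq2m}(\Der^{(j)})=0$ for every $j\geq0$. Reassembling the weight pieces then yields $H_{\geq2m}(\Der(\land V),D)=\langle[\theta]\rangle$.

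I expect the main obstacle to be the bookkeeping: setting up the weight decomposition cleanly, checking that in each fixed degree only finitely many weights occur (so the splitting is an honest direct sum), and verifying that $\ad([\theta])$ is genuinely a weight-lowering, degree-raising, $D$-compatible operator on homology. Once that is in place the non-zero-divisor hypothesis does the real work through a very short induction, essentially because it prevents any homology of weight $\geq0$ from being annihilated by $[\theta]$, while the weight $-1$ range into which $[\theta]$ pushes everything has already been shown to vanish above degree $2m$.
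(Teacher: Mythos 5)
Your argument is correct, and it reaches the conclusion by a genuinely different route from the paper's. The paper also decomposes everything along powers of $y$, but it works at the chain level: given a cycle $\alpha$ with $\alpha(y)=0$, it writes $\alpha=\sum_i y^i\alpha_i$ and inductively constructs an explicit primitive $\eta=\sum_i y^i\eta_i$; the key device is the twisted derivation $\widehat{\beta}=\sum_n\tfrac{y^n}{n!}\,\ad^n(\theta)(\beta)$, arranged to commute with $\theta$ so that the non-zero-divisor hypothesis can be applied to it, after which a binomial-coefficient computation identifies $\widehat{\beta}$ with $\alpha_{r+1}$. You instead package the same $y$-power decomposition as a grading of the whole complex $(\Der(\land V),D)$ by weight shift $j\ge -1$, compute the $j=-1$ piece outright as $H^{2m-*}(\land W,d)$, and let $\ad([\theta])$ --- a weight-lowering, $D$-compatible operator --- together with the non-zero-divisor hypothesis kill all weights $j\ge 0$ by a short induction. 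Your version is shorter, avoids the explicit construction of a nullhomotopy, and makes transparent exactly why the hypothesis applies: a nonzero class of pure weight $j\ge 0$ cannot lie in $\langle[\theta]\rangle$, which sits in weight $-1$ --- a point the paper's proof handles only implicitly. Two bookkeeping items, the first of which you partly flagged: in a fixed derivation degree the weight decomposition is in general a direct \emph{product} rather than a direct sum (when $V$ is infinite-dimensional a single derivation can have nonzero components in infinitely many weights), but $D$ preserves each factor and homology commutes with products of complexes, so nothing is lost; and you should note explicitly that the resulting split injections $H_*(\Der^{(j)})\hookrightarrow H_*(\Der(\land V))$ make ``nonzero in the weight piece'' and ``nonzero in the total homology'' agree, which is what lets you pass between the hypothesis (stated in the total Lie algebra) and the weight-by-weight vanishing.
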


\begin{proof}
Suppose that we have a derivation cycle $\alpha \in \Der_q(\land V)$ with $q \geq 2m$  representing a class $[\alpha] \in H_{\geq 2m}(\Der(\land V), D)$ in the complement of  $\langle [\theta] \rangle$.  We will show that  if $[\theta]$ is  not a zero divisor then  $\alpha$ is a $D$-boundary. 

Without loss of generality, we assume that $\alpha(y) = 0,$ for otherwise we may replace $\alpha$ with $\alpha - \theta.$       Given  $\chi \in \land W$, write 
$$\alpha(\chi) = \alpha_0(\chi) + y\alpha_1(\chi) + \cdots + y^r \alpha_r(\chi) + \cdots,$$
which defines derivations $\alpha_i$ of $\land W$ for $i \geq 0$.  Since $\alpha$ is a $D$-cycle, and $y$ is a $d$-cycle, and the differential $d$ restricts to one of $\land W$, it follows that each $\alpha_i$ is a $D$-cycle.  Notice that the sum here is locally finite, namely only finitely many terms are non-zero for a fixed $\chi$.  We will show inductively that we have a sequence of derivations $\{\eta_i\}_{i \geq 0}$ of $\land W$, for which $D(\eta_i) = \alpha_i$, and it follows that the (locally finite) sum 
$$ \eta(\chi) = \eta_0(\chi) + y\eta_1(\chi) + \cdots + y^r \eta_r(\chi) + \cdots,$$
has $D(\eta) = \alpha$.  Induction starts with $i = -1$, where there is nothing to prove.  For some $r \geq -1$, assume that we have derivations $\{\eta_i\}_{0 \leq i \leq r}$ with $D(\eta_i) = \alpha_i$.  Then $D(y^i\eta_i) = y^i\alpha$, and
$$\alpha - D\left( \sum_{i=0}^{r}\ y^i \eta_i \right)(\chi)  = y^{r+1}\alpha_{r+1}(\chi) + y^{r+2}\alpha_{r+2}(\chi) + \cdots .$$
%
Write $\beta(\chi) = \alpha_{r+1}(\chi) + y\alpha_{r+2}(\chi) + y^{2}\alpha_{r+3}(\chi)+ \cdots,$ so that
$$\alpha - D\left( \sum_{i=0}^{r}\ y^i \eta_i \right)  = y^{r+1} \beta.$$
Since the left-hand side here is a $D$-cycle, it follows that so too is $\beta$ a $D$-cycle.  Now use $\beta$ to construct the derivation
$$\widehat{\beta} = \beta - y\, [\theta, \beta] + \frac{y^2}{2!}\, \big[ \theta, [\theta, \beta]\big] + \cdots +  \frac{y^n}{n!}\, \mathrm{ad}^n(\theta)(\beta) + \cdots.$$
Since $y$ is a $d$-cycle, and $\theta$ and $\beta$ are both $D$-cycles, each term in this (locally finite) sum is a derivation that is a $D$-cycle.  Hence, $\widehat{\beta}$ is a 
$D$-cycle.  But observe that we have 
$[\theta, \widehat{\beta}] =   ([\theta, \beta] - 1\cdot [\theta, \beta]) + (-  y\, \big[ \theta, [\theta, \beta]\big] + y\, \big[ \theta, [\theta, \beta]\big] ) + \cdots = 0.$
Then our assumption that $\theta$ is not a zero divisor implies that  $\widehat{\beta} = D(\eta)$ for some $\eta \in \Der_{q+1}(\land V)$.   However, we have $\widehat{\beta} = \beta_0$.  This follows by writing out terms in $\widehat{\beta}(\chi)$, as
$$
\begin{aligned}
\widehat{\beta}(\chi) &= \beta(\chi) - y\, [\theta, \beta] (\chi) + \cdots \\
&= \beta(\chi) - y\, \theta\circ \beta (\chi) + \cdots + \frac{y^n}{n!}\,\theta^n\circ \beta(\chi) + \cdots \\
&= (\alpha_{r+1}(\chi) + y\alpha_{r+2}(\chi) + y^{2}\alpha_{r+3}(\chi)+ \cdots)  \\ &\hbox{\hskip.75truein}  - y\, \theta(\alpha_{r+1}(\chi) + y\alpha_{r+2}(\chi) + y^{2}\alpha_{r+3}(\chi)+ \cdots)\\
&\hbox{\hskip.75truein} + \cdots + \frac{y^n}{n!}\,\theta^n(\alpha_{r+1}(\chi) + y\alpha_{r+2}(\chi) + y^{2}\alpha_{r+3}(\chi)+ \cdots) + \cdots \\
&= (\alpha_{r+1}(\chi) + y\alpha_{r+2}(\chi) + y^{2}\alpha_{r+3}(\chi)+ \cdots) \\ &\hbox{\hskip.75truein} - ( y\alpha_{r+2}(\chi) + 2y^{2}\alpha_{r+3}(\chi)+ 3y^{3}\alpha_{r+4}(\chi)+\cdots)\\
&\hbox{\hskip.75truein} + \cdots + ( y^n\alpha_{r+1+n}(\chi) + {{n+1}\choose{n}}y^{n+1}\,\alpha_{r+1+n+1}(\chi)+ \cdots) + \cdots.
\end{aligned}
$$
Notice above, in passing from the first to the second lines, we replaced $\mathrm{ad}^n(\theta)(\beta)(\chi)$ with $\theta^n\circ \beta(\chi)$ since $\theta(\chi) = 0$.
A careful tallying of the terms in this last expression reveals that it consists of a sum of terms $y^n\alpha_{r+1+n}(\chi)$ for $n \geq 0$ and, for $n \geq 1$, the coefficient of 
$y^n\alpha_{r+1+n}(\chi)$ is the alternating sum
$$1 - n + {{n}\choose{2}} + \cdots + (-1)^t\, {{n}\choose{t}} + \cdots + (-1)^n.$$
But this is zero, since it is the value of $(1-x)^n$ when $x = 1$.  That is, we have 
$$D(\eta)(\chi) =  \widehat{\beta}(\chi) = \alpha_{r+1}(\chi).$$
But this means that we have $D(\eta)  = \alpha_{r+1}$, as derivations of $\land W$, and setting $\eta = \eta_r$ completes the inductive step.  Notice that $\alpha_r$ is of degree
$|\alpha_r| = |\alpha| - 2mr,$
and $|\eta_r| = |\alpha_r| +1$, so for a fixed $\chi$   only finitely many $\eta_r(\chi)$ can be non-zero, for degree reasons. It follows from the induction that,
with 
$$\eta = \sum_{i\geq 0}\ y^i \eta_i,$$
we have $D(\eta) = \alpha$.   
\end{proof}


\begin{proof}[Proof of Theorem \ref{S3}]  
Suppose $X$ with Sullivan minimal model $(\land V, d)$ satisfies $\B(X_\Q) \simeq (S^3 \vee \cdots \vee S^3)_\Q.$  Let $p_{X_\Q}  \colon E_{X_\Q} \to \B(X_\Q)$ denote the universal fibre map for $X_\Q$.   Suppose    $(p_{X_\Q})_\sharp \colon \pi_3(E_{X_\Q}) \to \pi_3(\B(X_\Q))  $  is not surjective.   Then there is  a  Gottlieb element $y \in V^2$.      Since $(\land V, d)$ is simply connected,  $dy = 0$.  Thus we have a factorization    $(\land V, d) \cong  (\land(y), 0)  \otimes (\land W, d)  $  for $W$ complementary to $\langle y \rangle$ in $V$.  We may   apply Lemma \ref{Z}.  However, here we have $$H_*(\Der(\land V), D) \cong \pi_*\big( \Omega \B(X_\Q)\big)\cong \pi_*\big( \Omega (S^3 \vee \cdots \vee S^3)_\Q\big) $$ and the latter is the free graded Lie algebra generated in degree $2$ by at least two elements.  No element of degree $2$ (or any other degree)    can be a zero divisor  and yet neither can we have $H_{\geq 2}(\Der (\land V), D)$ one-dimensional (indeed, $H_{\geq 2}(\Der(\land V), D)$ is infinite-dimensional).  We have a contradiction.

We may assume  $(p_{X_\Q})_\sharp \colon \pi_3(E_{X_\Q}) \to \pi_3(\B(X_\Q))  $  is   surjective.    Then,  for each summand $S^3_\Q$ involved in  the wedge, we may lift the inclusion   $S_\Q^3 \to (S^3 \vee \cdots \vee S^3)_\Q$ through $p_{X_\Q}$ to a map $S_\Q^3 \to E_{X_\Q}$.  Assembling  these liftings gives a lifting of the identity map of $\B(X_\Q) \simeq (S^3 \vee \cdots \vee S^3)_\Q$ through $p_{X_\Q}$ and so a section of the universal fibration.   
\end{proof}

 We conclude with a homotopy-theoretic proof of Theorem \ref{terminal} using the universal fibration for rational spaces.       \begin{proof}[Alternate Proof of Theorem \ref{terminal}]
 Given a space $X$ with a Gottlieb element $\alpha \in \pi_{2n}(X)$ we show that there is a corresponding terminal element $y \in V^{2n}$ in a Sullivan minimal model $(\land V, d)$ for $X$.   
 Let $F \colon S^{2n} \times X \to X$ be the affiliated map for $\alpha.$  
  The rationalization of   $F$ is a map $F_0 \colon S^{2n}_\Q  \times   X_\Q  \to X_\Q$ which has adjoint $\beta_0 \colon S^{2n}_\Q \to \aut(X_\Q).$  From the isomorphisms 
  $$\pi_{2n+1}(\B(X_\Q)) \cong \pi_{2n}(\Omega \B(X_\Q)) \cong \pi_{2n}(\aut(X_\Q))$$
we may view $\beta_0$ as a class $\beta_0 \colon S^{2n+1}_\Q  \to \B(X_\Q)$.  The pullback of the universal fibration   $X_\Q \to E_{X_\Q} \to \B(X_\Q)$ by $\beta_0$  is a fibration of the form $X_\Q \to E_\Q \to S^{2n+1}_\Q$ with connecting homomorphism satisfying $\partial_\sharp(\iota_{2n+1}) = \alpha_0$ where $\iota_{2n+1} \in \pi_{2n+1}( S^{2n+1}_\Q)$ is the fundamental class and $\alpha_0 \in \pi_{2n}(X_\Q)$ image of $\alpha$ under rationalization.      Stepping back one stage in the Puppe sequence gives a fibration $\Omega S^{2n+1}_\Q  \to X_\Q \to E_\Q$.  Now observe $\Omega S^{2n+1}_\Q = K(\Q, 2n)$. 
The relative Sullivan model for this fibration is thus of the form $$(\land V', d') \rightarrow (\land(V' \oplus \langle y \rangle), D)    \to (\land(y), 0).$$
Since $y \in V^{2n}$ is a Gottlieb element, using \cite[Pro.15.13]{FHT} we can see that  middle term  is a minimal Sullivan model for $X_\Q$ having  terminal homotopy  element $y$ corresponding to the class $\alpha$.  
 \end{proof}


\bibliographystyle{amsplain}
\bibliography{Baut}

 \end{document}